\newcommand{\Z}{\mathbb{Z}}
\newcommand{\R}{\mathbb{R}}
\newcommand{\C}{\mathbb{C}}
\newcommand{\Skew}{A \rtimes_\sigma G}
\DeclareMathOperator{\identity}{id}
\DeclareMathOperator{\Aut}{Aut}
\DeclareMathOperator{\Supp}{Supp}
\DeclareMathOperator{\Ideal}{Ideal}
\DeclareMathOperator{\cl}{cl}
\DeclareMathOperator{\Homeo}{Homeo}
\DeclareMathOperator{\Stab}{Stab}
\theoremstyle{plain}
\newtheorem{theorem}{Theorem}[section]
\newtheorem{lemma}[theorem]{Lemma}
\newtheorem{proposition}[theorem]{Proposition}
\newtheorem{corollary}[theorem]{Corollary}
\theoremstyle{definition}
\newtheorem{definition}{Definition}[section]
\newtheorem{example}{Example}[section]
\newtheorem{remark}{Remark}
\begin{document}

\author{Johan \"{O}inert\footnote{Address: Department of Mathematical Sciences, University of Copenhagen, Universitetsparken 5, DK-2100 Copenhagen \O, Denmark, E-mail: oinert@math.ku.dk}}

\title{Simplicity of skew group rings of abelian groups}

\maketitle

\begin{abstract}
Given a group $G$, a (unital) ring $A$ and a group homomorphism $\sigma : G \to \Aut(A)$,
one can construct the skew group ring $\Skew$.
We show that a skew group ring $\Skew$, of an abelian group $G$,
is simple if and only if its centre is a field and $A$ is $G$-simple.
If $G$ is abelian and $A$ is commutative,
then $\Skew$ is shown to be simple if and only if $\sigma$ is injective
and $A$ is $G$-simple.
As an application
we show that a transformation group
$(X,G)$, where $X$ is a compact Hausdorff space and $G$ is abelian,
is minimal and faithful if and only if its associated
skew group algebra $C(X)\rtimes_\sigma G$ is simple.
We also provide an example of a skew group algebra, of an (non-abelian) ICC group,
for which the above conclusions fail to hold.
\end{abstract}

\pagestyle{headings}


\section{Introduction}

Given a group $G$, a (unital) ring $A$ and a group homomorphism $\sigma : G \to \Aut(A)$,
one can construct the skew group ring $\Skew$ (see Section \ref{Section:Preliminaries} for details).
Skew group rings serve as an elementary
way of constructing non-commutative rings.
They occur naturally in different branches of mathematics,
e.g.
in the representation theory of Artin algebras \cite{ReitenRiedtmann1985},
in the computation of Grothendieck groups \cite{AuslanderReiten1986},
in the study of singularities \cite{Auslander1986,AuslanderReiten1987},
in orbifold theory \cite{Yamskulna2002} and
in the Galois theory of skew fields \cite{XuShum2008}.
Recently, skew group rings have proven to be an important tool
in the investigation of Calabi-Yau algebras derived from superpotentials \cite{WuZhu2011}
and in the representation theory of certain preprojective algebras \cite{Demonet10,HouYang11}.

The ideal structure of skew group rings has been studied in depth
(see e.g. \cite{CrowThesis,Crow05,FisherMontgomery78,Mihovski2001,MontgomeryBook,OsterburgPassman1992,OsterburgPeligrad1991,PassmanBook,ZaleskiiNeroslavskii1975}).
Nevertheless, necessary and sufficient conditions for simplicity
of a general skew group ring are not known.

The present author has shown, in his thesis \cite[Theorem E.3.5]{OinertThesis} (see also \cite{Oinert}),
that for a skew group ring $\Skew$ over
a commutative ring $A$, the subring $A$ is a maximal
commutative subring of $\Skew$ if and only if $A$ has the so called \emph{ideal intersection property}
in $\Skew$, i.e. each non-zero ideal
of $\Skew$ intersects $A$ non-trivially. From this one obtains the
following characterization of simplicity of skew
group rings over commutative rings (\cite[Theorem E.6.13]{OinertThesis}).

\begin{theorem}[\cite{Oinert,OinertThesis}]\label{SimplicityMaxComm}
Let $\Skew$ be a skew group ring where $A$ is a commutative ring.
The following two assertions are equivalent:
\begin{enumerate}[{\rm (i)}]
	\item $\Skew$ is a simple ring;
	\item $A$ is $G$-simple and $A$ is a maximal commutative subring of $\Skew$.
\end{enumerate}
\end{theorem}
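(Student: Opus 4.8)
The plan is to establish the two implications separately, leaning on the cited characterization that, for commutative $A$, the subring $A$ is maximal commutative in $\Skew$ precisely when $A$ has the ideal intersection property. Throughout I would write a general element of $\Skew$ as a finite sum $\sum_{g \in G} a_g \delta_g$ with $a_g \in A$, where $\delta_g$ denotes the canonical invertible element implementing $\sigma_g$; the two facts I would use repeatedly are that each $\delta_g$ is a unit with $\delta_g^{-1} = \delta_{g^{-1}}$ and that $\delta_g\, a\, \delta_g^{-1} = \sigma_g(a)$ for $a \in A$.

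For (ii) $\Rightarrow$ (i), assume $A$ is $G$-simple and maximal commutative, and let $J$ be a non-zero two-sided ideal of $\Skew$. Since $A$ is maximal commutative, the cited result supplies the ideal intersection property, so $J \cap A \neq 0$. I would then check that $J \cap A$ is a $G$-invariant ideal of $A$: it is visibly an ideal of $A$, and for $a \in J \cap A$ the conjugate $\delta_g\, a\, \delta_g^{-1} = \sigma_g(a)$ again lies in $J \cap A$, so $\sigma_g(J \cap A) = J \cap A$ for all $g \in G$. By $G$-simplicity $J \cap A = A$, hence $1 \in J$ and $J = \Skew$, proving simplicity.

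For (i) $\Rightarrow$ (ii), assume $\Skew$ is simple. Maximal commutativity is immediate: in a simple ring every non-zero ideal is the whole ring and therefore meets $A$ (it contains $1$), so $A$ trivially has the ideal intersection property, and the cited result yields that $A$ is maximal commutative. For $G$-simplicity, given a proper $G$-invariant ideal $I \subsetneq A$, I would form $I \rtimes_\sigma G = \{\sum_g a_g \delta_g : a_g \in I\}$ and verify that it is a two-sided ideal of $\Skew$: closure under left and right multiplication by an element $b\delta_h$ uses, respectively, that $\sigma_h(I) \subseteq I$ and that $I$ is an ideal of $A$. This ideal is non-zero when $I \neq 0$ and proper since $1 \notin I$, contradicting simplicity; hence the only $G$-invariant ideals of $A$ are $0$ and $A$.

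The routine manipulations (invertibility of $\delta_g$, the conjugation formula, closure of $I \rtimes_\sigma G$) are mechanical. The genuine content is external to this argument: it sits in the cited equivalence between maximal commutativity and the ideal intersection property, which is the step controlling how ideals of $\Skew$ interact with the commutative subring $A$. Given that input, the point to be most careful about is the direction (ii) $\Rightarrow$ (i), where one must not merely intersect $J$ with $A$ but also exploit the $G$-invariance of $J \cap A$ obtained by conjugating with the units $\delta_g$.
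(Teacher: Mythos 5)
Your proof is correct and follows exactly the route the paper indicates: the paper does not spell out a proof of this theorem but explicitly states that it is obtained from the cited equivalence (for commutative $A$) between maximal commutativity of $A$ and the ideal intersection property, which is precisely the pivot of your argument in both directions. The remaining steps you supply --- the $G$-invariance of $J\cap A$ via conjugation by the units $u_g$, and the ideal $I\rtimes_\sigma G$ witnessing non-simplicity when $A$ is not $G$-simple --- are the standard ones (the latter also appears in the paper's Proposition \ref{NecessaryConditions}), and are carried out correctly.
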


In this article we instead turn the focus to the case when $A$ is arbitrary, but
$G$ is abelian. Under the assumption that $G$ is abelian and $A$
is $G$-simple, we show that every non-zero ideal of $\Skew$ contains
a non-zero central element (Proposition \ref{CenterIntersection}).
Using this we are able to give a characterization of simplicity
of skew group rings of abelian groups, and this is the first main result of this article.

\begin{theorem}\label{MainTheorem}
Let $\Skew$ be a skew group ring.
Consider the following three assertions:
\begin{enumerate}[{\rm (i)}]
	\item\label{MainTheorem:Simple} $\Skew$ is a simple ring;
	\item\label{MainTheorem:FieldGSimple} $A$ is $G$-simple and $Z(\Skew)$ is a field;
	\item\label{MainTheorem:InjectiveGSimple} $A$ is $G$-simple and $\sigma : G \to \Aut(A)$ is injective.
\end{enumerate}
The following conclusions hold:
\begin{enumerate}[{\rm (a)}]
	\item\label{MainTheorem:A} \eqref{MainTheorem:Simple} implies \eqref{MainTheorem:FieldGSimple} and \eqref{MainTheorem:InjectiveGSimple};
	\item\label{MainTheorem:B} in general \eqref{MainTheorem:FieldGSimple} and \eqref{MainTheorem:InjectiveGSimple} do not imply \eqref{MainTheorem:Simple};
	\item\label{MainTheorem:C} if $G$ is an abelian group, then \eqref{MainTheorem:Simple} is equivalent to \eqref{MainTheorem:FieldGSimple};
	\item\label{MainTheorem:D} if $G$ is an abelian group and $A$ is a commutative ring, then \eqref{MainTheorem:Simple}, \eqref{MainTheorem:FieldGSimple}
	and \eqref{MainTheorem:InjectiveGSimple} are all equivalent.
\end{enumerate}
\end{theorem}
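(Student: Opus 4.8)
The four assertions are linked as follows: part~(a) is the ``easy'' direction and holds for arbitrary $G$; part~(c) is obtained by feeding Proposition~\ref{CenterIntersection} into the field hypothesis; part~(d) is obtained by combining (c) with Theorem~\ref{SimplicityMaxComm}; and part~(b) requires producing a genuine counterexample. I would prove them in the order (a), (c), (b), (d), since each later part reuses the earlier ones.

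For (a) I would verify the three constituent implications separately. To see that \eqref{MainTheorem:Simple} forces $A$ to be $G$-simple, note that a non-zero proper $G$-invariant ideal $I\subsetneq A$ gives rise to the non-zero proper two-sided ideal $\sum_{g\in G} I\delta_g$ of $\Skew$ (here $G$-invariance, i.e.\ $\sigma_g(I)=I$, is exactly what makes this set closed under left and right multiplication by $A$ and by the units $\delta_g$), contradicting simplicity. That \eqref{MainTheorem:Simple} forces $Z(\Skew)$ to be a field is the standard fact that the centre of a unital simple ring is a field: any non-zero $z\in Z(\Skew)$ generates the two-sided ideal $z\,\Skew=\Skew\,z$, which must be all of $\Skew$, so $z$ is invertible and its inverse is again central. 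Finally, to see that \eqref{MainTheorem:Simple} forces $\sigma$ to be injective, suppose $N=\ker\sigma\neq\{e\}$; since the action descends to $G/N$, there is a surjective ring homomorphism $\Skew\to A\rtimes_{\bar\sigma}(G/N)$ sending $a\delta_g$ to $a\delta_{gN}$, whose kernel is proper (the target is a non-zero unital ring) and non-zero (it contains $\delta_n-1$ for any $e\neq n\in N$), again contradicting simplicity.

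Part (c) is then immediate for abelian $G$: the implication \eqref{MainTheorem:Simple}$\Rightarrow$\eqref{MainTheorem:FieldGSimple} is part of (a), and for the converse I would take any non-zero two-sided ideal $J$ of $\Skew$, invoke Proposition~\ref{CenterIntersection} to obtain a non-zero central element $z\in J\cap Z(\Skew)$, and use that $z$ is invertible (as $Z(\Skew)$ is a field) to conclude $J=\Skew$. For part (b) it suffices to exhibit one skew group algebra in which \eqref{MainTheorem:FieldGSimple} and \eqref{MainTheorem:InjectiveGSimple} hold but \eqref{MainTheorem:Simple} fails; I would use the example constructed in this paper, taking $G$ to be a non-abelian ICC group acting so that $A$ is $G$-simple and $\sigma$ is injective, while the ICC property keeps $Z(\Skew)$ a field even though $\Skew$ is not simple. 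This example also shows that the hypothesis ``$G$ abelian'' cannot be dropped in (c).

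For (d), with $A$ commutative and $G$ abelian, the implications \eqref{MainTheorem:Simple}$\Leftrightarrow$\eqref{MainTheorem:FieldGSimple} are already covered by (a) and (c), so the task reduces to linking \eqref{MainTheorem:InjectiveGSimple} to them; I would prove \eqref{MainTheorem:InjectiveGSimple}$\Rightarrow$\eqref{MainTheorem:Simple} via Theorem~\ref{SimplicityMaxComm}, for which I must show that $A$ is a maximal commutative subring of $\Skew$ ($G$-simplicity being already assumed). A direct computation shows that $\sum_g a_g\delta_g$ centralises $A$ precisely when $a_g\bigl(\sigma_g(b)-b\bigr)=0$ for all $b\in A$ and all $g$, i.e.\ when each $a_g$ annihilates the ideal $I_g$ of $A$ generated by $\{\sigma_g(b)-b : b\in A\}$. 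The crux is then to show $I_g=A$ for every $g\neq e$: using that $G$ is abelian one checks $\sigma_h(I_g)=I_g$ for all $h$, so $I_g$ is $G$-invariant and $G$-simplicity forces $I_g\in\{0,A\}$; the possibility $I_g=0$ would mean $\sigma_g=\identity$, which is excluded by injectivity of $\sigma$. Hence $I_g=A$, its annihilator is $0$, the centraliser of $A$ collapses to $A$, and maximal commutativity follows. The main obstacle is precisely this centraliser/annihilator analysis --- verifying the $G$-invariance of $I_g$ and extracting maximal commutativity --- together with the separate construction needed for (b).
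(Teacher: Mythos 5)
Your proposal is correct and follows essentially the same route as the paper: part (a) via the standard necessary conditions, part (b) via the ICC-group example, part (c) by feeding Proposition \ref{CenterIntersection} into the field hypothesis, and part (d) by establishing maximal commutativity of $A$ and invoking Theorem \ref{SimplicityMaxComm}. The only local deviations are cosmetic: for injectivity of $\sigma$ in (a) you use the quotient homomorphism onto $A\rtimes_{\bar\sigma}(G/\ker\sigma)$ where the paper uses the augmentation map on the ideal generated by $u_e-u_g$, and in (d) you work with the $G$-invariant ideal generated by $\{\sigma_g(b)-b\}$ where the paper's Proposition \ref{MaxCommInjectivity} works with its annihilator --- both variants are valid and rest on the same use of $G$-simplicity and commutativity of $G$.
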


In 1978 Power showed \cite{Power} that a topological dynamical system $(X,\Z)$ (of an infinite compact Hausdorff space $X$)
is minimal if and only if its associated crossed product $C^*$-algebra $C^*(C(X)\rtimes \Z)$ is simple.
The present author showed in \cite[Theorem E.7.6]{OinertThesis} (see also \cite{Oinert}) that, analogously,
minimality of $(X,\Z)$ is equivalent to simplicity
of the skew group algebra $C(X)\rtimes \Z$.
Recently it was shown by de Jeu, Svensson and Tomiyama \cite{ChristianBanach}
that the analogous result also holds for the crossed product Banach algebra $\ell^1( C(X) \rtimes \Z)$.

Let $X$ be a compact Hausdorff space and $G\curvearrowright X$ a strongly continuous action,
inducing a group homomorphism $\sigma : G \to \Aut(C(X))$ (see Section \ref{DynamicalSection} for details).
This allows us to define the skew group algebra $C(X) \rtimes_\sigma G$, and
as an application of Theorem \ref{MainTheorem} we obtain the second main result of this article
which is a generalization
of the aformentioned (algebraic) result on $(X,\Z)$.

\begin{theorem}\label{TransGroupTheorem}
Let $(X,G)$ be a transformation group of a compact Hausdorff space $X$.
Consider the following five assertions:
\begin{enumerate}[{\rm (i)}]
	\item\label{TransGroupTheorem:Simple} $C(X) \rtimes_\sigma G$ is a simple algebra;
	\item\label{TransGroupTheorem:GSimpleMaxComm} $C(X)$ is $G$-simple and $C(X)$ is a maximal commutative subalgebra of $C(X) \rtimes_\sigma G$;
	\item\label{TransGroupTheorem:GSimpleField} $C(X)$ is $G$-simple and $Z(C(X) \rtimes_\sigma G)$ is a field; 
	\item\label{TransGroupTheorem:GSimpleInjective} $C(X)$ is $G$-simple and $\sigma : G \to \Aut(C(X))$ is injective;
	\item\label{TransGroupTheorem:MinimalFaithful} $(X,G)$ is minimal and faithful.
\end{enumerate}
The following conclusions hold:
\begin{enumerate}[{\rm (a)}]
  \item\label{TransGroupTheorem:A} \eqref{TransGroupTheorem:Simple} and \eqref{TransGroupTheorem:GSimpleMaxComm} are equivalent and imply \eqref{TransGroupTheorem:GSimpleField}, \eqref{TransGroupTheorem:GSimpleInjective} and \eqref{TransGroupTheorem:MinimalFaithful};
  \item\label{TransGroupTheorem:B} \eqref{TransGroupTheorem:GSimpleInjective} and \eqref{TransGroupTheorem:MinimalFaithful} are equivalent;
  \item\label{TransGroupTheorem:C} in general \eqref{TransGroupTheorem:GSimpleField}, \eqref{TransGroupTheorem:GSimpleInjective}
  and \eqref{TransGroupTheorem:MinimalFaithful} do not imply \eqref{TransGroupTheorem:Simple} and \eqref{TransGroupTheorem:GSimpleMaxComm};
  \item\label{TransGroupTheorem:D} if $G$ is an abelian group, then
  \eqref{TransGroupTheorem:Simple}, \eqref{TransGroupTheorem:GSimpleMaxComm},
  \eqref{TransGroupTheorem:GSimpleField}, \eqref{TransGroupTheorem:GSimpleInjective}
  and \eqref{TransGroupTheorem:MinimalFaithful} are all equivalent.
\end{enumerate}
\end{theorem}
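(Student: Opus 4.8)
The plan is to reduce everything to the two cited theorems applied to the commutative algebra $A = C(X)$, after first establishing by hand the purely dynamical part of the dictionary. Writing $u_g \in C(X)\rtimes_\sigma G$ for the canonical copy of $g \in G$, I would record two standard translations. Via the hull--kernel correspondence, proper non-zero $G$-invariant ideals of $C(X)$ correspond to proper non-empty closed $G$-invariant subsets of $X$; hence $C(X)$ is $G$-simple if and only if $(X,G)$ is minimal. And because $C(X)$ separates the points of the compact Hausdorff space $X$, a homeomorphism inducing the identity automorphism of $C(X)$ must itself be the identity, so $\ker(G \to \Homeo(X)) = \ker \sigma$ and $(X,G)$ is faithful if and only if $\sigma$ is injective. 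Together these two equivalences show that \eqref{TransGroupTheorem:GSimpleInjective} and \eqref{TransGroupTheorem:MinimalFaithful} coincide, which is conclusion \eqref{TransGroupTheorem:B}.

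Conclusion \eqref{TransGroupTheorem:A} is then immediate: the equivalence of \eqref{TransGroupTheorem:Simple} and \eqref{TransGroupTheorem:GSimpleMaxComm} is exactly Theorem \ref{SimplicityMaxComm} for $A = C(X)$, while Theorem \ref{MainTheorem}\eqref{MainTheorem:A} (with $A = C(X)$) gives that \eqref{TransGroupTheorem:Simple} implies both \eqref{TransGroupTheorem:GSimpleField} and \eqref{TransGroupTheorem:GSimpleInjective}; finally \eqref{TransGroupTheorem:MinimalFaithful} follows from \eqref{TransGroupTheorem:GSimpleInjective} by \eqref{TransGroupTheorem:B}. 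For conclusion \eqref{TransGroupTheorem:D}, when $G$ is abelian I would invoke Theorem \ref{MainTheorem}\eqref{MainTheorem:D} for the commutative algebra $C(X)$ to get that \eqref{TransGroupTheorem:Simple}, \eqref{TransGroupTheorem:GSimpleField} and \eqref{TransGroupTheorem:GSimpleInjective} are equivalent, and then adjoin \eqref{TransGroupTheorem:GSimpleMaxComm} via \eqref{TransGroupTheorem:A} and \eqref{TransGroupTheorem:MinimalFaithful} via \eqref{TransGroupTheorem:B}, yielding the equivalence of all five assertions.

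The genuine work is conclusion \eqref{TransGroupTheorem:C}, which demands a counterexample; by \eqref{TransGroupTheorem:D} it must involve a non-abelian group. I would take an ICC group $G$ acting on a compact Hausdorff space $X$ minimally, faithfully, but not topologically freely, meaning some $g_0 \neq e$ fixes a non-empty open set $U \subseteq X$. The mechanism is the following computation: an element $\sum_g a_g u_g$ commutes with all of $C(X)$ exactly when each $a_g$ is supported on the fixed-point set of $g$, and it is central exactly when in addition $\sigma_h(a_{h^{-1}gh}) = a_g$ for every $h \in G$. For a central element the ICC hypothesis forces $a_g = 0$ whenever $g \neq e$, since otherwise that relation would make $a$ non-zero on the entire (infinite) conjugacy class of $g$, contradicting finiteness of the support; meanwhile the relation with $g = e$ makes the surviving term $a_e$ a $G$-invariant function, which minimality forces to be constant. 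Thus $Z(C(X)\rtimes_\sigma G)$ reduces to the scalars and is a field, so \eqref{TransGroupTheorem:GSimpleField} holds, while minimality and faithfulness give \eqref{TransGroupTheorem:GSimpleInjective} and \eqref{TransGroupTheorem:MinimalFaithful}. On the other hand, choosing via Urysohn's lemma a non-zero $a_{g_0} \in C(X)$ supported in $U$, the element $a_{g_0} u_{g_0}$ commutes with $C(X)$ but does not lie in $C(X)$; hence $C(X)$ is not maximal commutative and, by Theorem \ref{SimplicityMaxComm}, $C(X)\rtimes_\sigma G$ is not simple, so both \eqref{TransGroupTheorem:Simple} and \eqref{TransGroupTheorem:GSimpleMaxComm} fail.

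The main obstacle is producing a concrete action with all of these features at once: $g_0$ must move enough points for faithfulness yet fix an open set, every orbit must be dense for minimality, and $G$ must be ICC so that the centre collapses to a field. Reconciling minimality with the presence of an open fixed set is the delicate point, and verifying these properties for an explicit $G \curvearrowright X$ is where the real effort lies.
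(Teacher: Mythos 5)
Your treatment of parts \eqref{TransGroupTheorem:A}, \eqref{TransGroupTheorem:B} and \eqref{TransGroupTheorem:D} is correct and is exactly the paper's argument: the two ``dictionary'' facts you establish by hand are the paper's Lemma \ref{FaithfulnessInjectivity} and Lemma \ref{MinimalEquivalences} (for the latter, note the small extra step in the paper that $G$-simplicity with respect to \emph{closed} ideals upgrades to $G$-simplicity with respect to all ideals, since the closure of a proper ideal of the unital Banach algebra $C(X)$ is proper), and the rest is the same bookkeeping with Theorems \ref{SimplicityMaxComm} and \ref{MainTheorem}.

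The gap is in part \eqref{TransGroupTheorem:C}. This is an existence statement, so a blueprint for a counterexample does not prove it: you must actually exhibit a transformation group with all the required properties, and you explicitly defer this (``verifying these properties for an explicit $G \curvearrowright X$ is where the real effort lies''). Your mechanism is the right one and coincides with the paper's --- an ICC group acting minimally and faithfully but with some $g_0\neq e$ fixing a non-empty open set, so that Proposition \ref{CenterContainedInAForICC} together with Lemma \ref{CentrumEkvivalenser}\eqref{CentrumEkvivalenser:B} collapses the centre to a field while a Urysohn function supported in the fixed open set defeats maximal commutativity --- but the paper completes it with the concrete choice $X=S^1$, $G=\Homeo(S^1)$ (Example \ref{MotExempel}), whose ICC property is verified in Proposition \ref{HomeoIsICC} via Lemma \ref{ICClemma}. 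Note also that the ``delicate point'' you worry about is not actually delicate: minimality concerns density of the orbit under the \emph{whole} group, whereas the open fixed set is required only for a \emph{single} element $g_0$, so for a group as large as $\Homeo(S^1)$ (which acts transitively) there is no tension at all. Supplying this example, or any equally explicit one, is what is needed to close the proof of \eqref{TransGroupTheorem:C}.
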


It is natural to ask whether this connection between minimality, faithfulness, freeness and simplicity
translates to crossed product $C^*$-algebras.
If $(X,G)$ is a second countable locally compact transformation group
with $G$ amenable and freely acting, then it is known (see \cite{WilliamsBook}) that
the crossed product $C^*$-algebra $C_0(X) \rtimes G$
is simple if and only if $G$ acts minimally on $X$.
If a group $G$ acts on a (Borel) measurable space $X$, in such a way
that the action is non-singular, free and ergodic, then Murray and von Neumann
have shown (see e.g. \cite{vonNeumann}) that the crossed product von Neumann algebra $L^\infty(X)\rtimes G$ is
a factor, i.e. simple. 

\section{Preliminaries}\label{Section:Preliminaries}

Let $A$ be a
unital and
associative ring, $G$ a multiplicatively written group with neutral element $e$
and $\sigma : G \to \Aut(A)$ a group homomorphism.
The triple $(A,G,\sigma)$ gives rise to a \emph{skew group ring}, denoted $\Skew$, in the following way.
Let $\{u_g\}_{g\in G}$ be a copy of $G$ (as a set) and define $\Skew$ as the free left $A$-module with basis $\{u_g\}_{g\in G}$.
Addition is defined by
$\sum_{g\in G} a_g u_g + \sum_{h\in G} b_h u_h := \sum_{g\in G} (a_g+b_g) u_g$
for $\sum_{g\in G} a_g u_g, \sum_{h\in G} b_h u_h \in \Skew$. Multiplication is defined as the bilinear extension of the rule
\begin{equation}\label{ProductRule}
	(a_g u_g)(b_h u_h) = a_g \sigma_g(b_h) u_{gh}
\end{equation}
for $g,h\in G$ and $a_g,b_h\in A$.
These two operations make $\Skew$ into a unital and associative ring.
The multiplicative identity in $\Skew$ is given by $1_A u_e$, but by abuse of notation we shall simply write $1$.
It follows from \eqref{ProductRule} that $u_g u_{g^{-1}} = u_{g^{-1}} u_g = 1_A u_e$ and hence $u_g^{-1} = u_{g^{-1}}$, for $g\in G$.
By putting $R_g := A u_g$, for $g\in G$, we see that $\Skew = \oplus_{g\in G} R_g$ is a strongly $G$-graded ring.
Each element $r$ of $\Skew$ may be written uniquely as $r=\sum_{g\in G} a_g u_g$ for some $a_g\in A$, for $g\in G$,
of which all but finitely many are zero. The support of $r$, denoted $\Supp(r)$, is defined as the finite set $\{g\in G \mid a_g \neq 0\}$
and its cardinality will be denoted by $|\Supp(r)|$.
The multiplication rule \eqref{ProductRule} yields $u_g a = \sigma_g(a) u_g$ for all $g\in G, a\in A$.
This means that, for each $g\in G$, the map $\sigma_g$ is implemented by the basis elements of $\Skew$, i.e.
\begin{displaymath}
	\sigma_g(a)=u_g a u_g^{-1}, \quad \forall a\in A.
\end{displaymath}
An ideal $I$ of $A$ is said to be \emph{$G$-invariant} if $\sigma_g(I)\subseteq I$ holds for all $g\in G$.
If $A$ and $\{0\}$ are the only $G$-invariant ideals of $A$, then $A$ is said to be \emph{$G$-simple}.
The \emph{fixed ring} of $A$ is defined as the set $A^G := \{a\in A \mid \sigma_g(a)=a, \,\, \forall g\in G\}$.

Given a subgroup $H$ of $G$ we denote by $A \rtimes_\sigma H$ the subring of $\Skew$
consisting of all elements $r\in \Skew$ which satisfy $\Supp(r)\subseteq H$.
The centralizer of a subset $S$ of a ring $T$ will be denoted by $C_T(S)$
and is defined as the set of all elements of $T$ that commute with each element of $S$.
If $S$ is a commutative subring of $T$ and $C_T(S)=S$ holds,
then $S$ is said to be a \emph{maximal commutative subring} of $T$.
The centre of $T$ will be denoted by $Z(T)$.

\noindent An automorphism $\varphi$ of a ring $T$ is said to be \emph{inner} if there exists a unit $v\in T$ such that $\varphi(t)=v t v^{-1}$ holds for all $t\in T$, and \emph{outer} otherwise.
The group homomorphism $\sigma : G \to \Aut(A)$, or simply $G$, is said to be \emph{inner} if $\sigma_g \in \Aut(A)$ is inner for some $g\in G\setminus \{e\}$,
and \emph{outer} otherwise.

We shall make use of the following two maps of abelian groups:\\
$\epsilon : \Skew \to A$, $\sum_{g\in G} a_g u_g \mapsto \sum_{g\in G} a_g$;
and $E: \Skew \to A$, $\sum_{g\in G} a_g u_g \mapsto a_e$.\\
The map $\epsilon$ is known as the \emph{augmentation map}
and if $\ker(\sigma)=G$, then $\epsilon$ is actually a ring morphism.

\section{Necessary conditions for simplicity of $\Skew$}

The following proposition gives some, presumably well-known, necessary conditions for simplicity of a general skew group ring.
For the sake of completeness, we include the proof.

\begin{proposition}\label{NecessaryConditions}
Let $R=\Skew$ be a skew group ring.
If $\Skew$ is simple, then the following three assertions hold:
\begin{enumerate}[{\rm (i)}]
	\item\label{NecessaryConditions:CentreField} $Z(\Skew)$ is a field;
	\item\label{NecessaryConditions:GSimple} $A$ is $G$-simple;
	\item\label{NecessaryConditions:SigmaInjective} $\sigma : G \to \Aut(A)$ is injective.
\end{enumerate}
\end{proposition}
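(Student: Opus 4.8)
The plan is to prove each of the three necessary conditions separately, in each case assuming simplicity of $R = \Skew$ and deriving a contradiction (or the desired conclusion) from the existence of a suitable proper nonzero ideal or central subring.

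\medskip

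\noindent\textbf{For \eqref{NecessaryConditions:CentreField}.} I would argue that for any simple unital ring $R$, the centre $Z(R)$ is automatically a field. Indeed $Z(R)$ is a commutative unital ring, so it suffices to show every nonzero $z \in Z(R)$ is invertible. Given such $z$, the set $Rz = zR$ is a two-sided ideal of $R$: it is closed under left and right multiplication precisely because $z$ is central. Since $z \neq 0$ and $R$ is simple, $Rz = R$, so there is $r \in R$ with $rz = 1$. A short check using centrality of $z$ shows $r$ is a two-sided inverse, and moreover $r$ itself lies in $Z(R)$ (since $z$ commutes with everything and is invertible). Hence $Z(R)$ is a field. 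This step is essentially formal and does not use the skew group ring structure at all.

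\medskip

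\noindent\textbf{For \eqref{NecessaryConditions:GSimple}.} Here I would show that any $G$-invariant ideal $I$ of $A$ generates a two-sided ideal of $R$, and that this correspondence lets us transport simplicity of $R$ to $G$-simplicity of $A$. Concretely, given a $G$-invariant ideal $I \trianglelefteq A$, consider $J := I \rtimes_\sigma G = \oplus_{g\in G} I u_g$, the set of elements whose coefficients all lie in $I$. One verifies $J$ is a two-sided ideal of $R$: left multiplication by $a u_h$ keeps coefficients in $I$ since $I$ is an ideal, while right multiplication by $a u_h$ uses the rule $u_g a = \sigma_g(a) u_g$ together with $G$-invariance $\sigma_g(I) \subseteq I$ to keep coefficients in $I$. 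By simplicity, $J \in \{\{0\}, R\}$. Applying the map $E : R \to A$ (or just reading off the coefficient of $u_e$) recovers $I$ from $J$, so $J = \{0\}$ forces $I = \{0\}$ and $J = R$ forces $I = A$. Thus $A$ is $G$-simple.

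\medskip

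\noindent\textbf{For \eqref{NecessaryConditions:SigmaInjective}.} This is the step I expect to be the main obstacle, since it genuinely exploits the product structure. Suppose $\sigma$ is not injective, so there is $g \in G \setminus \{e\}$ with $\sigma_g = \identity$, i.e. $g \in \ker(\sigma)$. The idea is to produce a proper nonzero two-sided ideal from the element $u_g - 1$. Because $\sigma_g = \identity$, the element $u_g$ is central: it commutes with every $a \in A$ (as $u_g a = \sigma_g(a) u_g = a u_g$) and with every basis element $u_h$ (since $G$ acts, one checks $u_g u_h = u_{gh} = u_{hg} = u_h u_g$ using that $\sigma$ is a homomorphism into $\Aut(A)$ — care is needed here, as this commutation of $u_g$ with $u_h$ requires the relevant elements to commute in $G$, which holds when $g$ is central in $G$; the clean general argument is that $u_g - 1$ generates a proper ideal). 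Hence $z := u_g - 1 \in Z(R)$ is a nonzero central element, and $z$ is not invertible because applying the augmentation-type map $E$ (or $\epsilon$) annihilates it while sending $1$ to $1_A$. Then $Rz$ is a proper nonzero two-sided ideal of $R$, contradicting simplicity. The delicate point to get right is the verification that $u_g - 1$ really does generate a proper ideal; the cleanest route is to invoke part \eqref{NecessaryConditions:CentreField}, namely that $Z(R)$ is a field, so the nonzero central element $u_g - 1$ would have to be invertible, which the map $\epsilon$ (a homomorphism on the subring $A \rtimes_\sigma \ker(\sigma)$, or evaluation arguments) shows is impossible.
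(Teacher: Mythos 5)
Parts \eqref{NecessaryConditions:CentreField} and \eqref{NecessaryConditions:GSimple} of your proposal are correct and essentially identical to the paper's proof: invertibility of nonzero central elements via $Rz=zR=R$, and the ideal $I\rtimes_\sigma G$ built from a $G$-invariant ideal $I$ of $A$. The problem is part \eqref{NecessaryConditions:SigmaInjective}. Your argument hinges on $z=u_g-1$ being central in $R$, and you yourself flag the obstruction: $u_gu_h=u_{gh}$ and $u_hu_g=u_{hg}$ agree only when $g$ and $h$ commute in $G$. An element $g\in\ker(\sigma)$ need not be central in $G$ (the kernel is a normal subgroup, not a central one), so for non-abelian $G$ the centrality of $u_g$ fails. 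Your proposed repair --- invoke part \eqref{NecessaryConditions:CentreField} so that the ``nonzero central element'' $u_g-1$ must be invertible --- presupposes exactly the centrality you have not established, and the fallback claim that ``$Rz$ is a proper nonzero two-sided ideal'' also needs it: without centrality $Rz$ is merely a left ideal, and a proper nonzero left ideal is no contradiction to simplicity. So your proof of \eqref{NecessaryConditions:SigmaInjective} has a genuine gap in the general (non-abelian) case, which is precisely the case the proposition must cover.

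The paper closes this gap without any centrality claim. Since $\sigma_g=\identity_A$, the element $u_e-u_g$ does commute with every $b\in A$ (even though it need not commute with the $u_h$), and this alone suffices: a typical generator of the two-sided ideal $I$ generated by $u_e-u_g$ computes as $a_su_s(u_e-u_g)b_tu_t=a_su_sb_t(u_e-u_g)u_t=a_s\sigma_s(b_t)(u_{st}-u_{sgt})$, whose image under the augmentation map $\epsilon$ is zero. Hence $\epsilon(I)=\{0\}$, and since $\epsilon\lvert_A$ is injective, $I\cap A=\{0\}$, so $I$ is proper. Simplicity then forces $I=\{0\}$, whence $u_e-u_g=0$ and $g=e$. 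You should replace the centrality argument with this direct computation; commutation with $A$ (which you do have from $\sigma_g=\identity_A$) is all that is needed to see that every element of the two-sided ideal has zero augmentation.
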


\begin{proof}
\eqref{NecessaryConditions:CentreField}:
Let $a\in Z(\Skew)\setminus \{0\}$. By the simplicity of $R$ we get $aR=Ra=R$, which shows that
$a$ is invertible. One easily checks that $a^{-1}$ belongs to $Z(\Skew)$.\\
\eqref{NecessaryConditions:GSimple}:
Let $J$ be a non-zero proper $G$-invariant ideal of $A$.
Then $J\rtimes_\sigma G$ is a non-zero ideal of $\Skew$.
By simplicity of $\Skew$ we get $J\rtimes_\sigma G = \Skew$
and hence $A\subseteq J \rtimes_\sigma G$. Thus $A \subseteq J$. This shows that $J=A$
and hence $A$ is $G$-simple.\\
\eqref{NecessaryConditions:SigmaInjective}:
Let $g\in \ker(\sigma)$ be arbitrary and denote by $I$ the two-sided ideal of $\Skew$
generated by the element $u_e - u_g$.
Note that for any $s,t\in G$ and
$a_s,b_t\in A$ we get
\begin{equation}\label{zeroaugmentation}
a_s u_s (u_e-u_g)b_t u_t = a_s u_s b_t (u_e-u_g)u_t
 = a_s \sigma_s(b_t) (u_{st}-u_{sgt}).
\end{equation}
Clearly $\epsilon(I)=\{0\}$.
Since $\epsilon\lvert_A : A \to A$ is injective we conclude that $I\cap
A = \{0\}$, which shows that $I\subsetneq \Skew$.
By the simplicity of $\Skew$ we conclude that $I=\{0\}$.
In particular $u_e - u_g =0$ and hence $g=e$.
This shows that $\ker(\sigma)=\{e\}$.
\end{proof}

\begin{remark}
Assertions (i)-(iii) above are in general not sufficient to guarantee simplicity of $\Skew$ (see Example \ref{MotExempel}).
\end{remark}

\section{The ideal intersection property for $Z(\Skew)$}

The following lemma, which was inspired by \cite{JonasJohan}, plays a
key role in the present article.

\begin{lemma}\label{SwitchIdealElement}
Let $R=\Skew$ be a skew group ring where $G$ is abelian and $A$ is $G$-simple.
For each non-zero $r\in \Skew$ there exists some $r' \in \Skew$ with the following properties:
\begin{enumerate}[{\rm (i)}]
	\item $r'\in RrR$;
	\item $E(r')=1$;
	\item $|\Supp(r')|\leq |\Supp(r)|$.
\end{enumerate}
\end{lemma}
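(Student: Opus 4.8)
The plan is to produce $r'$ in two stages: first arrange, without changing the size of the support, that the coefficient of $u_e$ is non-zero; then use $G$-simplicity together with commutativity to rescale that coefficient to $1$ while keeping the support inside its current set. For the first stage I pick any $h\in\Supp(r)$ and set $\tilde r := r u_{h^{-1}}$. Right multiplication by the unit $u_{h^{-1}}$ shifts the support along the bijection $g\mapsto gh^{-1}$, so $|\Supp(\tilde r)|=|\Supp(r)|$ and $E(\tilde r)=a_h\neq 0$; also $\tilde r\in rR\subseteq RrR$. Writing $\tilde r=\sum_g \tilde a_g u_g$, we now have $\tilde a_e\neq 0$.

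The heart of the argument, and the only place the abelian hypothesis is used, is the behaviour of conjugation by basis elements. Since $G$ is abelian, $u_g u_h u_g^{-1}=u_{ghg^{-1}}=u_h$, whence
\[ u_g \tilde r u_g^{-1}=\sum_{h\in G}\sigma_g(\tilde a_h)\,u_h . \]
Because $\sigma_g$ is an automorphism this has support exactly $\Supp(\tilde r)$ and, crucially, leaves each group element fixed rather than permuting it; in particular its $u_e$-coefficient is $\sigma_g(\tilde a_e)$. (For non-abelian $G$ the conjugate would instead be supported on $g\Supp(\tilde r)g^{-1}$, and summing conjugates over different $g$ could enlarge the support---this is exactly what commutativity prevents.)

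Next I invoke $G$-simplicity: the smallest $G$-invariant two-sided ideal of $A$ containing $\tilde a_e$ is all of $A$, so $1$ lies in the ordinary two-sided ideal generated by $\{\sigma_g(\tilde a_e):g\in G\}$. Thus there are finitely many $b_i,c_i\in A$ and $g_i\in G$ with $\sum_i b_i\,\sigma_{g_i}(\tilde a_e)\,c_i=1$. I then set $r':=\sum_i b_i\, u_{g_i}\tilde r u_{g_i}^{-1}\, c_i$. Each summand lies in $R\tilde r R\subseteq RrR$ and, by the previous paragraph, is supported inside $\Supp(\tilde r)$, so $r'\in RrR$ and $|\Supp(r')|\leq|\Supp(\tilde r)|=|\Supp(r)|$. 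Finally, since $E\bigl(b\,(u_g\tilde r u_g^{-1})\,c\bigr)=b\,\sigma_g(\tilde a_e)\,c$ (left and right multiplication by elements of $A$ do not move the identity component), we obtain $E(r')=\sum_i b_i\,\sigma_{g_i}(\tilde a_e)\,c_i=1$, as required.

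The main point to check carefully is that the support genuinely does not grow in the second stage. This reduces to the displayed identity above, together with the elementary observation that sandwiching $\sum_h\sigma_g(\tilde a_h)u_h$ between the scalars $b_i$ and $c_i$ can only annihilate coefficients, never create them outside $\Supp(\tilde r)$. The remaining verifications---the support count in the first stage and the computation of $E(r')$---are routine applications of the multiplication rule \eqref{ProductRule} and the definition of $E$.
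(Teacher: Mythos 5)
Your proof is correct and follows essentially the same route as the paper: normalize by right-multiplying with $u_{h^{-1}}$, observe that for abelian $G$ conjugation by $u_g$ acts on coefficients by $\sigma_g$ without moving the support, and apply $G$-simplicity to an ideal built from the $u_e$-coefficient. The only cosmetic difference is that the paper packages the argument via the ideal $J=\{E(s)\mid s\in RrR,\ \Supp(s)\subseteq\Supp(r)\}$, whereas you apply $G$-simplicity directly to the $G$-invariant ideal generated by $\tilde a_e$ and write the witness $r'$ explicitly; both hinge on the same two observations.
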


\begin{proof}
Take an arbitrary non-zero element $r\in R$. We can write $r=\sum_{g\in G}a_g u_g$, where $a_g\in A$ is zero for all but finitely many $g\in G$.
Since $r$ is non-zero we can choose some $h\in G$ such that $a_h\neq 0$. The element $r u_{h^{-1}}$ clearly belongs to $RrR$ and we note that
$|\Supp(ru_{h^{-1}})|=|\Supp(r)|$ and $E(r u_{h^{-1}})=a_h \neq 0$. Thus, without loss of generality, we may replace $r$ by $r u_{h^{-1}}$
and can therefore assume that $r=\sum_{g\in G}a_g u_g$ is such that $a_e\neq 0$.
The set
\[J=\big\{E(s) \mid s \in RrR \text{ such that } \Supp(s)\subseteq \Supp(r) \big\}\]
contains the non-zero element $a_e$ (since $r\in RrR$) and hence $J$ is a non-zero ideal of $A$.
We claim that $J$ is $G$-invariant. Indeed, if $a\in J$, then $a+\sum_{g\in\Supp(r)\setminus\{e\}}b_g u_g\in RrR$ for some $b_g\in A$, $g\in \Supp(r)\setminus\{e\}$. For any $h\in G$, we get
\[RrR\ni u_h (a+\sum_{g\in\Supp(r)\setminus\{e\}} b_g u_g) u_{h^{-1}} = \sigma_h(a) + \sum_{g\in\Supp(r)\setminus\{e\}} \sigma_h(b_g) \underbrace{u_{hgh^{-1}}}_{=u_g} \]
which yields $\sigma_h(a)\in J$. This shows that $J$ is $G$-invariant. By the $G$-simplicity of $A$ we conclude that $1\in A=J$. Hence there is some $r':=1+\sum_{g\in\Supp(r)\setminus\{e\}}b_g u_g\in RrR$, for some $b_g \in A $, $g\in \Supp(r) \setminus \{e\}$, which clearly satisfies (i)-(iii).
\end{proof}

Recall from \cite{OinertLundstrom} that a subring $S$ of a ring $T$ is said to have the \emph{ideal intersection property} (in $T$)
if $S\cap I \neq \{0\}$ holds for each non-zero ideal $I$ of $T$.

\begin{proposition}\label{CenterIntersection}
Let $R=\Skew$ be a skew group ring where $G$ is abelian and $A$ is $G$-simple.
Every non-zero ideal of $R$ has non-empty intersection with $Z(R)\cap \big(1+\sum_{g\in G\setminus\{e\}} A u_g\big)$. In particular, $Z(R)$
has the ideal intersection property in $R$.
\end{proposition}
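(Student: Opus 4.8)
The plan is to run a minimal-support argument with Lemma \ref{SwitchIdealElement} as the engine. Fix a non-zero ideal $I$ of $R$. Applying Lemma \ref{SwitchIdealElement} to any non-zero $r\in I$ produces an element $r'\in RrR\subseteq I$ with $E(r')=1$, so the set of elements $s\in I$ with $E(s)=1$ is non-empty. Among all such $s$ I would choose one, say $s = 1 + \sum_{g\in\Supp(s)\setminus\{e\}} c_g u_g$, whose support $|\Supp(s)|$ is minimal. The goal is to show that this particular $s$ is central; since $E(s)=1$ it automatically lies in $1 + \sum_{g\in G\setminus\{e\}} A u_g$ and is non-zero, which would give the claim.

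To see that $s$ commutes with $A$, I would consider, for arbitrary $a\in A$, the element $sa-as\in I$. A direct computation gives $sa-as = \sum_{g}(c_g\sigma_g(a) - a c_g)u_g$, and because $c_e=1$ and $\sigma_e=\identity$ the coefficient at $u_e$ vanishes. Hence $\Supp(sa-as)\subseteq\Supp(s)\setminus\{e\}$, so $|\Supp(sa-as)| < |\Supp(s)|$. If $sa-as$ were non-zero, Lemma \ref{SwitchIdealElement} applied to it would yield an element of $I$ with $E$-value $1$ and strictly smaller support, contradicting minimality. Thus $sa=as$ for every $a\in A$.

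The analogous step for the basis elements $u_h$ is where the abelianness of $G$ enters. For $h\in G$ I would look at $u_h s u_h^{-1} - s \in I$. Using $u_h c_g u_h^{-1}=\sigma_h(c_g)$ and, crucially, $u_h u_g u_h^{-1} = u_{hgh^{-1}} = u_g$ (which holds precisely because $G$ is abelian), one computes $u_h s u_h^{-1} - s = \sum_g(\sigma_h(c_g) - c_g)u_g$. Again the $u_e$-coefficient is $\sigma_h(1)-1 = 0$, so the support shrinks to within $\Supp(s)\setminus\{e\}$, and the same minimality contradiction forces $u_h s u_h^{-1} = s$, i.e.\ $s$ commutes with every $u_h$.

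Having shown that $s$ commutes with all of $A$ and with each $u_h$, which together generate $R$, I conclude $s\in Z(R)$, so $s\in I\cap Z(R)\cap\big(1+\sum_{g\in G\setminus\{e\}} A u_g\big)$, proving the first assertion. The ``in particular'' statement is then immediate: $s$ is a non-zero central element of $I$ (non-zero since $E(s)=1$), whence $Z(R)\cap I\neq\{0\}$, i.e.\ $Z(R)$ has the ideal intersection property in $R$. The main point requiring care is the bookkeeping that guarantees both commutators have vanishing $e$-component and support contained in $\Supp(s)$; the hypothesis that $G$ is abelian is exactly what keeps $u_h u_g u_h^{-1}$ inside the original support in the second step, and $G$-simplicity is what powers Lemma \ref{SwitchIdealElement} at each reduction.
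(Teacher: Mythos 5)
Your proof is correct and follows essentially the same strategy as the paper: a minimal-support element, shown to be central because its commutators with $A$ and with the $u_h$ have strictly smaller support. The only (harmless) variation is that you minimize $|\Supp(s)|$ over the elements of $I$ with $E(s)=1$ and re-invoke Lemma \ref{SwitchIdealElement} to derive each contradiction, whereas the paper minimizes over all non-zero elements of $I$ and so can conclude directly that each commutator vanishes.
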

\begin{proof}
Let $I$ be a non-zero ideal of $R$. Choose some $r\in I \setminus \{0\}$ such that $|\Supp(r)|$ is as small as possible. By Lemma \ref{SwitchIdealElement} there exists some $r' \in RrR\subseteq I$ such that $E(r')=1$ and $|\Supp(r')| \leq |\Supp(r)|$. In fact, by minimality of $|\Supp(r)|$ among all non-zero elements of $I$, we have $|\Supp(r')|=|\Supp(r)|$. Let $a\in A$ be arbitrary. Note that $E(r'a-ar')=a-a=0$ and thus $|\Supp(r'a-ar')|<|\Supp(r')|$. By the minimality of $|\Supp(r')|$ and the fact that $r'a-ar'\in I$ we conclude that $r'a-ar'=0$. This shows that $r'$ belongs to the centralizer of $A$. Now, let $g\in G$ be arbitrary. Note that $E(u_g r' u_g^{-1} - r')=1-1=0$ and thus $|\Supp(u_g r' u_g^{-1} - r')|<|\Supp(r')|$. Again, since $u_g r' u_g^{-1}-r'\in I$, by the minimality of $|\Supp(r')|$ we get $u_g r' u_g^{-1}-r'=0$. This shows that $u_g r'=r' u_g$, for all $g \in G$. Since $R=\Skew$ is generated as a ring by the elements of $A$ and $\{u_g\}_{g\in G}$, we conclude that $r'\in I\cap Z(R)\cap \big(1+\sum_{g\in G\setminus\{e\}} A u_g\big)$.
\end{proof}

\noindent The following lemma can sometimes be used to decide if $Z(\Skew)$ is a field or not.

\begin{lemma}\label{CentrumEkvivalenser}
Let $\Skew$ be a skew group ring. Consider the following assertions:
\begin{enumerate}[{\rm (i)}]
	\item $Z(\Skew)\subseteq A$;
	\item $Z(\Skew)=A^G \cap Z(A)$;
	\item[{\rm (iii)}] $Z(\Skew)$ is a field.
\end{enumerate}
The following conclusions hold:
\begin{enumerate}[{\rm (a)}]
	\item\label{CentrumEkvivalenser:A} {\rm (i)} and {\rm (ii)} are equivalent;
	\item\label{CentrumEkvivalenser:B} if $A$ is $G$-simple, then {\rm (i)} and {\rm (ii)} imply {\rm (iii)};
	\item\label{CentrumEkvivalenser:C} if $G$ is an orderable abelian group, then {\rm (iii)} implies {\rm (i)} and {\rm (ii)}.
\end{enumerate}
\end{lemma}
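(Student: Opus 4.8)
The plan is to base everything on a coefficientwise description of the centre. Writing a general element as $z=\sum_{g\in G}a_g u_g$, the relation $zb=bz$ for all $b\in A$ is equivalent to $a_g\sigma_g(b)=b a_g$ for all $g\in G$ and all $b\in A$. In particular an element $a\in A$ lies in $Z(\Skew)$ exactly when $a\in Z(A)$ (it commutes with $A$) and $\sigma_h(a)=u_h a u_h^{-1}=a$ for all $h$ (it commutes with every $u_h$), i.e. exactly when $a\in A^G\cap Z(A)$. This records two facts I will use repeatedly: $A^G\cap Z(A)\subseteq Z(\Skew)$, and $Z(\Skew)\cap A=A^G\cap Z(A)$. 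Part \eqref{CentrumEkvivalenser:A} is then immediate: assuming (i) we get $Z(\Skew)=Z(\Skew)\cap A=A^G\cap Z(A)$, which is (ii); conversely (ii) forces $Z(\Skew)=A^G\cap Z(A)\subseteq A$, which is (i).

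For part \eqref{CentrumEkvivalenser:B} I would assume (i)/(ii), so that $Z(\Skew)=A^G\cap Z(A)$, and show every nonzero $a$ in it is invertible there. Since $a\in Z(A)$ and $A$ is unital, the two-sided ideal it generates is $Aa=aA$, and since $a\in A^G$ this ideal is $G$-invariant (indeed $\sigma_h(Aa)=A\sigma_h(a)=Aa$); being nonzero, $G$-simplicity gives $Aa=A$, so $a$ is a unit of $A$. The inverse of a central element is central and the inverse of a $G$-fixed element is $G$-fixed, hence $a^{-1}\in A^G\cap Z(A)=Z(\Skew)$. Thus $Z(\Skew)$ is a commutative unital ring in which every nonzero element is invertible, i.e. a field.

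Part \eqref{CentrumEkvivalenser:C} is where the real work lies, and it hinges on one key observation. Since $G$ is abelian, comparing coefficients in $zu_h=u_h z$ shows that every coefficient $a_g$ of a central $z$ lies in $A^G$; together with $a_g\sigma_g(b)=b a_g$ this means that \emph{for each $g$ the monomial $a_g u_g$ is itself central}. Now if $Z(\Skew)$ is a field and $a_g\neq 0$, then $a_g u_g$ is a nonzero element of a field, hence a unit of $R=\Skew$; and a monomial $a u_g$ is a unit of $R$ only if $a$ is a unit of $A$ (its inverse must be a monomial $a' u_{g^{-1}}$, forcing $a$ invertible). Consequently every coefficient of every nonzero central element is a unit of $A$. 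This is the crux, and the step I expect to be the main obstacle: a priori the leading and trailing coefficients appearing in the order argument below could be zero-divisors of $A$, in which case products of leading terms might vanish and the argument would collapse; the field hypothesis is precisely what rules this out.

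With this in hand I would fix a total order on $G$ and prove that every nonzero central element is a monomial. Given $0\neq v\in Z(\Skew)$ with inverse $v^{-1}\in Z(\Skew)$, let $p,s$ be the largest and smallest elements of $\Supp(v)$ and $q,t$ those of $\Supp(v^{-1})$. In $vv^{-1}=1$ the coefficient at the top position $pq$ is $a_p\sigma_p(b_q)=a_p b_q$ (using $b_q\in A^G$), a product of units and hence nonzero; since $pq$ is the unique maximal element of $\Supp(v)\Supp(v^{-1})$ and $vv^{-1}=1$ is supported only at $e$, we conclude $pq=e$, and likewise $st=e$. From $s\le p$, $t\le q$, $q=p^{-1}$, $t=s^{-1}$ and translation invariance of the order we get $s^{-1}\le p^{-1}$, i.e. $p\le s$, whence $s=p$ and $\Supp(v)=\{p\}$. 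Finally, if some central monomial $a u_g$ had $g\neq e$, then $1+a u_g$ would be a nonzero central element with the two-element support $\{e,g\}$, contradicting that central elements are monomials. Hence every nonzero central element is of the form $a u_e=a\in A$, so $Z(\Skew)\subseteq A$, which is (i); and (ii) then follows from part \eqref{CentrumEkvivalenser:A}.
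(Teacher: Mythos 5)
Your proof is correct and follows essentially the same route as the paper: parts (a) and (b) are identical to the paper's argument, and for part (c) both proofs exploit that for abelian $G$ the homogeneous components of a central element are themselves central, then run a leading/trailing-term argument with respect to an order on $G$ on a product equal to $1$. The only cosmetic difference is that you first show every nonzero central element is a monomial by comparing extremal support elements of $v$ and $v^{-1}$ (using that the coefficients are units of $A$), whereas the paper goes straight to $1+cu_g$ and the support of its inverse; both hinge on the same invertibility-of-central-elements mechanism.
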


\begin{proof}
(a)
(i)$\Rightarrow$(ii): Let $a\in Z(\Skew)\subseteq A$. Then $a u_g = u_g a$ holds for all $g\in G$. Hence $(a-\sigma_g(a)) u_g=0$, or equivalently
$a=\sigma_g(a)$, for all $g\in G$. Hence $Z(\Skew)\subseteq A^G \cap Z(A)$.
The other inclusion is straightforward.\\
(ii)$\Rightarrow$(i): This is trivial.\\
(b)
(ii)$\Rightarrow$(iii): Suppose that $A$ is $G$-simple. Let $a\in A^G \cap Z(A)$ be non-zero. Then $Aa$ is a non-zero $G$-invariant ideal of $A$. Thus $Aa=A$. In particular, $1 \in Aa$, which shows that $a$ is invertible in $A$ and one can easily check that the inverse of $a$ belongs to $A^G \cap Z(A)$.\\
(c)
(iii)$\Rightarrow$(i): Suppose that $G$ is an orderable abelian group.
Assume that $Z(\Skew) \cap Au_g \neq \{0\}$ for some $g\in G \setminus \{e\}$ and take some non-zero $cu_g \in Z(\Skew) \cap Au_g$.
Then $1+cu_g \in Z(\Skew) \setminus \{0\}$ is invertible. Using that $G$ is an orderable group, we may without loss of generality assume that $g>e$. Let $r$ be the inverse of $1+cu_g$ and write $r=r_{h_1}u_{h_1}+\ldots+r_{h_k}u_{h_k}$, where $r_{h_i} \in A\setminus\{0\}$ for some distinct $h_1,\ldots,h_k\in G$ such that $h_1 < \ldots < h_k$. The term of lowest degree in the product $(1+cu_g)r$ is $1r_{h_1}u_{h_1}$, and the term of highest degree is $cu_gr_{h_k}u_{h_k}=c\sigma_{g}(r_{h_k})u_{gh_k}$, which is non-zero since $cu_g$ is invertible.
On the other hand, $(1+cu_g)r=1$ is homogeneous and therefore $k=1$. Hence $r_{h_1}u_{h_1}+c\sigma_{g}(r_{h_k})u_{gh_k}=1$, but this is a contradicton since $g>e$. Hence $Z(\Skew)\subseteq A$.
\end{proof}

\begin{example}[Inner actions and simplicity]\label{InnerExample}
Let $A=M_2(\R)$ and $G=\Z/2\Z$.
Put $M=\bigl( \begin{smallmatrix} 0 & 1 \\ -1 & 0 \end{smallmatrix} \bigr)$
and note that $M^2=-I$.
Define $\sigma : G \to \Aut(A)$ by $\sigma_0=\identity_A$ and $\sigma_1(a)=MaM^{-1}$ for $a\in A$.
The action of $G$ is clearly \emph{inner}.
We claim that $\Skew$ is a simple ring. Since $A$ is simple, and therefore $G$-simple,
it follows by Theorem \ref{MainTheorem}\eqref{MainTheorem:C} that it is enough to show that $Z(\Skew)$ is a field.
By a straightforward calculation we get
\begin{displaymath}
	Z(\Skew)=\{ a_0 I u_0 + a_1 M u_1 \in \Skew \mid a_0,a_1 \in \R \}
\end{displaymath}
which is a field. Indeed, let $a_0 I u_0 + a_1 M u_1$ be an arbitrary non-zero element of $Z(\Skew)$.
Using the fact that $a_0^2+a_1^2 \neq 0$
it follows by elementary linear algebra that the equation
$(a_0 I u_0 + a_1 M u_1)(b_0 I u_0 + b_1 M u_1)=1$
always has a unique solution $(b_0,b_1) \in \R^2$.
\end{example}

\begin{remark}
In \cite[Proposition 2.1]{Crow05} Crow proved the claim of Corollary \ref{CrowCorollary} below.
Example \ref{InnerExample} shows that outerness of the action is not a necessary condition for simplicity of the corresponding skew group ring.
It also motivates the need for Theorem \ref{MainTheorem}\eqref{MainTheorem:C}.
\end{remark}

We shall now show that \cite[Proposition 2.1]{Crow05} can easily be obtained as a corollary of Theorem \ref{MainTheorem}.

\begin{corollary}\label{CrowCorollary}
Let $\Skew$ be a skew group ring where $G$ is abelian and outer.
The following two assertions are equivalent:
\begin{enumerate}[{\rm (i)}]
	\item\label{CrowCorollary:Simple} $\Skew$ is a simple ring;
	\item\label{CrowCorollary:GSimple} $A$ is $G$-simple.
\end{enumerate}
\end{corollary}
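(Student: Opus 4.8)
The plan is to derive Corollary \ref{CrowCorollary} directly from Theorem \ref{MainTheorem}\eqref{MainTheorem:C}, which already establishes that for an abelian group $G$ the skew group ring $\Skew$ is simple if and only if $A$ is $G$-simple and $Z(\Skew)$ is a field. Since $G$ is assumed abelian, the implication \eqref{CrowCorollary:Simple}$\Rightarrow$\eqref{CrowCorollary:GSimple} is immediate from Proposition \ref{NecessaryConditions}\eqref{NecessaryConditions:GSimple}, so the whole content of the corollary lies in the reverse implication \eqref{CrowCorollary:GSimple}$\Rightarrow$\eqref{CrowCorollary:Simple}. By Theorem \ref{MainTheorem}\eqref{MainTheorem:C}, it therefore suffices to show that, under the additional hypothesis that the action is \emph{outer}, $G$-simplicity of $A$ forces $Z(\Skew)$ to be a field.

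First I would invoke Proposition \ref{CenterIntersection}: because $G$ is abelian and $A$ is $G$-simple, any non-zero central element can be analyzed through the structure $Z(R)\cap\big(1+\sum_{g\in G\setminus\{e\}}Au_g\big)$, but more usefully the proposition shows the centre already behaves well with respect to ideals. The key step, however, is to pin down the centre explicitly. I would take an arbitrary central element $r=\sum_{g}a_g u_g\in Z(\Skew)$ and extract constraints from commuting with every $a\in A$ and with every $u_h$. Commutation with $A$ forces, for each $g\in\Supp(r)$, the relation $a_g\sigma_g(b)=b a_g$ for all $b\in A$; commutation with $u_h$ (using that $G$ is abelian, so $hgh^{-1}=g$) forces $\sigma_h(a_g)=a_g$ for all $h\in G$. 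The crucial observation is that if some $g\neq e$ lies in $\Supp(r)$, then the element $a_g$ witnesses a nontrivial intertwining $a_g\sigma_g(b)=ba_g$, and one shows via $G$-simplicity of $A$ that such an $a_g$ must be invertible (the ideal it generates is $G$-invariant and non-zero), whence $\sigma_g$ becomes the inner automorphism $b\mapsto a_g^{-1}b a_g$. This contradicts outerness of the action. Hence $\Supp(r)\subseteq\{e\}$, giving $Z(\Skew)\subseteq A$.

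Once $Z(\Skew)\subseteq A$ is established, I would apply Lemma \ref{CentrumEkvivalenser}: condition (i) holds, so by part \eqref{CentrumEkvivalenser:A} condition (ii) holds, namely $Z(\Skew)=A^G\cap Z(A)$, and then by part \eqref{CentrumEkvivalenser:B}, using the standing hypothesis that $A$ is $G$-simple, we conclude that $Z(\Skew)$ is a field. Combining this with $G$-simplicity of $A$ and Theorem \ref{MainTheorem}\eqref{MainTheorem:C} yields simplicity of $\Skew$, completing the reverse implication.

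The main obstacle I anticipate is the invertibility argument for $a_g$ in the central support analysis. One must argue carefully that the intertwining relation $a_g\sigma_g(b)=ba_g$ together with $G$-invariance of $a_g$ (i.e. $\sigma_h(a_g)=a_g$) produces a non-zero $G$-invariant ideal of $A$ — for instance the two-sided ideal generated by $a_g$ — so that $G$-simplicity forces it to equal $A$ and hence $a_g$ to be invertible; then one verifies that $\sigma_g(b)=a_g^{-1}ba_g$ genuinely exhibits $\sigma_g$ as inner, contradicting the outerness hypothesis. Getting the handedness of the intertwining relation right and confirming that the generated ideal is indeed $G$-invariant are the delicate points, but neither requires more than the definitions and the $G$-simplicity of $A$.
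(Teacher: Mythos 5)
Your proposal is correct and follows essentially the same route as the paper: reduce to showing $Z(\Skew)\subseteq A$ by analyzing a central element $r=\sum_g a_g u_g$, use the intertwining relation $ba_g=a_g\sigma_g(b)$ together with $a_g\in A^G$ and $G$-simplicity to make $a_g$ invertible, conclude $\sigma_g$ is inner and hence $g=e$ by outerness, and finish via Lemma \ref{CentrumEkvivalenser} and Theorem \ref{MainTheorem}\eqref{MainTheorem:C}. The only inessential difference is your passing mention of Proposition \ref{CenterIntersection}, which is not needed here since it is already absorbed into Theorem \ref{MainTheorem}\eqref{MainTheorem:C}.
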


\begin{proof}
\eqref{CrowCorollary:Simple}$\Rightarrow$\eqref{CrowCorollary:GSimple}: This follows from Theorem \ref{MainTheorem}\eqref{MainTheorem:A}.\\
\eqref{CrowCorollary:GSimple}$\Rightarrow$\eqref{CrowCorollary:Simple}:
By Lemma \ref{CentrumEkvivalenser}\eqref{CentrumEkvivalenser:B} and
Theorem \ref{MainTheorem}\eqref{MainTheorem:C} it is enough to show that $Z(\Skew)\subseteq A$.
Let $r=\sum_{g\in G} a_g u_g$ be an arbitrary non-zero element of $Z(\Skew)$.
Take $g\in G$ such that $a_g\neq 0$.
Since $r\in Z(\Skew)$ we conclude that $a_g \in A^G$ and
\begin{equation}\label{ACommutationRelation}
	ba_g = a_g \sigma_g(b), \quad \forall b\in A.
\end{equation}
Using that $a_g\in A^G$ it is clear that the set
$J:=A a_g A = A a_g = a_g A$ is a non-zero $G$-invariant ideal of $A$
and hence $J=A$. Thus, $1=a_g c$ for some $c\in A$.
From \eqref{ACommutationRelation} we get
$1=\sigma_g(1)=a_g \sigma_g(c) = c a_g$, which shows that $a_g$ is invertible.
Therefore $\sigma_g(b)=a_g^{-1} b a_g$ for all $b\in A$, so $\sigma_g$ is inner.
We now conclude that $g=e$.
\end{proof}

\section{Injectivity of $\sigma : G \to \Aut(A)$ and maximal commutativity of $A$}

Maximal commutativity of $A$ in $\Skew$ implies injectivity of $\sigma : G \to \Aut(A)$.
If $A$ is e.g. an integral domain, then it is easy to see that the two assertions are equivalent.
The same conclusion does not, however, hold for an arbitrary commutative ring $A$.
The following proposition describes a situation in which the two assertions are in fact equivalent.

Let $K$ denote the kernel of the group homomorphism $\sigma : G \to \Aut(A)$.

\begin{proposition}\label{MaxCommInjectivity}
Let $R=\Skew$ be a skew group ring where $G$ is an abelian group and $A$ is a commutative and $G$-simple ring.
Then $C_R(A)=A \rtimes_{\sigma} K$.
In particular, $A$ is a maximal commutative subring of $\Skew$
if and only if $\sigma$ is injective.
\end{proposition}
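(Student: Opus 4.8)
The plan is to directly compute the centralizer $C_R(A)$ and then deduce the maximal commutativity characterization as a consequence. First I would establish the inclusion $A \rtimes_\sigma K \subseteq C_R(A)$, which is the easy direction: if $r = \sum_{g\in K} a_g u_g$ with each $g$ in the kernel $K$, then $\sigma_g = \identity_A$, so for any $b \in A$ the relation $u_g b = \sigma_g(b) u_g = b u_g$ shows $u_g$ commutes with $A$; since $A$ is commutative, each $a_g$ also commutes with $A$, and hence $r$ commutes with all of $A$. Note this direction uses neither abelianness of $G$ nor $G$-simplicity of $A$.

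The substantive direction is $C_R(A) \subseteq A \rtimes_\sigma K$. Here I would take an arbitrary $r = \sum_{g \in G} a_g u_g \in C_R(A)$ and show that $a_g = 0$ whenever $g \notin K$. The commutation condition $br = rb$ for all $b \in A$ yields, by comparing coefficients of $u_g$, the relation $b a_g = a_g \sigma_g(b)$ for every $b \in A$ and every $g \in G$; this is precisely the relation \eqref{ACommutationRelation} appearing in the proof of Corollary \ref{CrowCorollary}. The goal is then to argue that if $a_g \neq 0$ for some fixed $g$, then $\sigma_g = \identity_A$, i.e. $g \in K$. The key obstacle is that for a general commutative ring $A$ one cannot simply invert $a_g$, so the argument of Corollary \ref{CrowCorollary} (which produced an invertible $a_g$ and concluded the action was inner) does not transfer verbatim.

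To overcome this I would exploit $G$-simplicity more carefully. Fix $g$ with $a_g \neq 0$ and consider the annihilator-type or ideal structure generated by $a_g$. Since $A$ is commutative, the relation $b a_g = a_g \sigma_g(b)$ can be rewritten as $a_g(\sigma_g(b) - b) = 0$ for all $b \in A$, so $a_g$ annihilates every element of the form $\sigma_g(b) - b$. I would then let $L_g := \{ a \in A \mid a(\sigma_g(b) - b) = 0 \text{ for all } b \in A\}$ be the annihilator of the ideal generated by $\{\sigma_g(b) - b : b \in A\}$; this $L_g$ is a non-zero ideal containing $a_g$, and I would verify it is $G$-invariant using that $G$ is abelian (so each $\sigma_h$ commutes with $\sigma_g$, sending the defining relations of $L_g$ to relations of the same form). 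By $G$-simplicity, $L_g = A$, so $1 \in L_g$, forcing $\sigma_g(b) - b = 0$ for all $b \in A$, i.e. $\sigma_g = \identity_A$ and $g \in K$. This establishes $C_R(A) = A \rtimes_\sigma K$.

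Finally, for the ``in particular'' claim: $A$ is maximal commutative in $R$ exactly when $C_R(A) = A$, which by the computed formula means $A \rtimes_\sigma K = A$, i.e. $K = \{e\}$, which is precisely injectivity of $\sigma$. I expect the main obstacle to be the careful verification that the ideal $L_g$ is both non-zero and $G$-invariant under the sole hypothesis that $A$ is commutative (rather than a domain); the abelianness of $G$ is exactly what makes the $G$-invariance work, and commutativity of $A$ is what lets me pass from the twisted relation \eqref{ACommutationRelation} to an annihilator condition amenable to the $G$-simplicity argument.
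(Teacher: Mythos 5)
Your proposal is correct and follows essentially the same route as the paper: the set you call $L_g$ is precisely the ideal $I=\{b\in A \mid (\sigma_h(a)-a)b=0,\ \forall a\in A\}$ used in the paper's proof, and your verification of its non-vanishing and $G$-invariance (via commutativity of $G$ and surjectivity of each $\sigma_h$) matches the paper's argument step for step. The only cosmetic difference is that you extract the relation $ba_g=a_g\sigma_g(b)$ by comparing coefficients, whereas the paper invokes the $G$-gradedness of $C_R(A)$ to isolate the homogeneous component $a_hu_h$; these amount to the same thing.
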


\begin{proof}
Let $\sum_{g\in K} a_g u_g$ be an arbitrary element of $A \rtimes_{\sigma} K$.
For any $a \in A$ we have
$a \sum_{g\in K} a_g u_g = \sum_{g\in K} a_g \sigma_g(a) u_g = \sum_{g\in K} a_g u_g a$.
This shows that $A \rtimes_{\sigma} K \subseteq C_R(A)$.
Now let $\sum_{g\in G} a_g u_g \in C_R(A) \setminus \{0\}$ be arbitrary.
Take $h\in G$ such that $a_h\neq 0$. Note that $a_h u_h \in C_R(A)$ since $C_R(A)$ is $G$-graded.
Consider the set
\begin{displaymath}
	I=\{b\in A \mid (\sigma_h(a)-a)b = 0, \,\, \forall a\in A\}.
\end{displaymath}
It is clear that $I$ is an ideal of $A$ and it is non-zero since $a_h\in I$.
Take $b\in I$ and $g\in G$.
Then $(\sigma_g(\sigma_h(a))-\sigma_g(a))\sigma_g(b)=0$ or equivalently
$(\sigma_h(\sigma_g(a))-\sigma_g(a))\sigma_g(b)=0$ holds for all $a\in A$.
This shows that $(\sigma_h(c)-c)\sigma_g(b)=0$ holds for all $c\in A$
and hence $\sigma_g(b)\in I$. Thus, $I$ is $G$-invariant.
By assumption we get $I=A$, so $1\in I$ which yields $\sigma_h=\identity_A$, i.e. $h\in K$.
Since $h$ was arbitrarily chosen we conclude that $C_R(A)\subseteq A \rtimes_{\sigma} K$.
\end{proof}

\section{An application to topological dynamical systems}\label{DynamicalSection}

Let $(X,G)$ be a \emph{transformation group} consisting of a topological group $G$ acting on a compact Hausdorff space $X$.
Furthermore, assume that the action $G\curvearrowright X$ is \emph{strongly continuous}, i.e.
the map $G \times X \to X$, $(g,x)\mapsto g.x$ is continuous with respect to the respective topologies.

The algebra of complex-valued continuous functions
on $X$, where the operations of addition and multiplication are defined pointwise, is denoted by $C(X)$.
We define $\lvert\lvert f \lvert\lvert := \sup_{x\in X} |f(x)|$, for $f\in C(X)$, and one easily checks that
this defines a norm on $C(X)$ which turns it into a unital
$C^*$-algebra.

\noindent The transformation group $(X,G)$ induces a group homomorphism
\begin{equation}\label{SigmaInduced}
	\sigma : G \to \Aut(C(X)), \quad
        \sigma_g(f)(x)=f(g^{-1}.x), \quad g\in G, \,\, f\in C(X), \,\, x\in X.
\end{equation}
It follows by the strong continuity of the action, that the automorphisms $\sigma_g\in \Aut(C(X))$, for $g\in G$, are all continuous.
We call $C(X)\rtimes_{\sigma} G$ the \emph{skew group
  algebra\footnote{The completion of this skew group algebra with respect to a suitable norm would be called a {\it crossed product $C^*$-algebra} by $C^*$-algebraists. In non-commutative ring theory, however, a skew group algebra is a special case of the more general (algebraic) \emph{\emph{crossed product}} construction.} associated to the transformation group $(X,G)$.}

\begin{definition}
If there, for each $g\in G\setminus \{e\}$, exists some $x\in X$
such that $g.x \neq x$,
then the transformation group $(X,G)$ is said to be \emph{faithful}.
A subset $V\subseteq X$ is said to be \emph{$G$-invariant} if $g.V
\subseteq V$ holds for all $g\in G$.
If $X$ contains no non-empty proper closed $G$-invariant
subset,
then the
transformation group $(X,G)$ is said to be \emph{minimal}.
\end{definition}

\begin{remark}
Note that a subset $V\subseteq X$ is $G$-invariant if and only if $g.V=V$ holds for all $g\in G$.
Minimality of $(X,G)$ may equivalently be stated as saying that for each $x\in X$ the
orbit of $x$, i.e. the set $\{g.x \mid g\in G\}$, is a dense subset of $X$.
\end{remark}

Let $\mathcal{P}_{\cl}(X)$ denote the set of all closed subsets of $X$, and $\Ideal_{\cl}(C(X))$ denote the set of all closed ideals of $C(X)$. There is a one-to-one correspondence between these sets. Indeed, consider the map
\begin{displaymath}
	\Ideal_{\cl}(C(X)) \ni I \stackrel{\varphi}{\mapsto} \{x\in X \mid f(x)=0 \text{ for all } f\in I\} \in \mathcal{P}_{\cl}(X)
\end{displaymath}
and the map
\begin{displaymath}
	\mathcal{P}_{\cl}(X) \ni V \stackrel{\psi}{\mapsto} \{f\in C(X) \mid f\lvert_V\equiv0\} \in \Ideal_{\cl}(C(X)).
\end{displaymath}
It follows that $\varphi$ and $\psi$ are well-defined and
that $\psi\circ\varphi=\identity_{\Ideal_{\cl}(C(X))}$ and $\varphi\circ\psi=\identity_{\mathcal{P}_{\cl}(X)}$.

\begin{lemma}\label{FaithfulnessInjectivity}
$(X,G)$ is faithful if and only if $\sigma$ (defined by \eqref{SigmaInduced}) is injective.
\end{lemma}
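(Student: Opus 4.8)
The plan is to reduce the biconditional to the following pointwise statement: for a fixed $g\in G$, the automorphism $\sigma_g\in\Aut(C(X))$ equals $\identity_{C(X)}$ if and only if $g$ acts as the identity homeomorphism on $X$, i.e. $g.x=x$ for all $x\in X$. Granting this, the two sides of the lemma match up directly: $\sigma$ is injective precisely when $\ker(\sigma)=\{e\}$, that is, when the only $g\in G$ with $\sigma_g=\identity_{C(X)}$ is $g=e$; while $(X,G)$ is faithful precisely when the only $g\in G$ fixing every point of $X$ is $g=e$. So the pointwise equivalence immediately yields $\ker(\sigma)=\{g\in G\mid g.x=x \text{ for all } x\in X\}$, and the lemma follows by comparing the descriptions of faithfulness and injectivity.

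First I would record the easy implication of the pointwise statement. If $g.x=x$ for all $x\in X$, then also $g^{-1}.x=x$ for all $x\in X$; this is immediate from the group-action axioms, since $g$ and $g^{-1}$ induce mutually inverse homeomorphisms of $X$. Hence for every $f\in C(X)$ and every $x\in X$ we have $\sigma_g(f)(x)=f(g^{-1}.x)=f(x)$, so $\sigma_g(f)=f$ and therefore $\sigma_g=\identity_{C(X)}$.

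For the converse I would argue by contraposition. Suppose $g$ does not act trivially, so that $g.x_0\neq x_0$ for some $x_0\in X$. Putting $x:=g.x_0$ we obtain $g^{-1}.x=x_0\neq x$, i.e. $g^{-1}$ moves the point $x$. Here is where the topological hypothesis enters: since $X$ is compact Hausdorff it is normal, so by Urysohn's lemma $C(X)$ separates the points of $X$. Thus there exists $f\in C(X)$ with $f(g^{-1}.x)\neq f(x)$, which means $\sigma_g(f)(x)\neq f(x)$ and hence $\sigma_g\neq\identity_{C(X)}$. This completes the pointwise equivalence and with it the proof.

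The only genuinely non-formal ingredient, and thus the main thing to get right, is the appeal to point separation by continuous functions; everything else is a matter of unwinding the definitions of $\sigma$, of faithfulness, and of injectivity, together with elementary properties of the group action. In particular, one should take care in passing between ``$g$ fixes every point'' and ``$g^{-1}$ fixes every point'' (equivalently, between a point and its image under $g$), since the definition of $\sigma_g$ involves $g^{-1}$ rather than $g$.
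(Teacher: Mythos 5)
Your proof is correct and follows essentially the same route as the paper's: both reduce to the observation that $\sigma_g=\identity_{C(X)}$ forces $g$ to fix every point, using Urysohn's lemma (via normality of the compact Hausdorff space $X$) to separate a point from its image. Your organization via the explicit kernel identity $\ker(\sigma)=\{g\in G\mid g.x=x \text{ for all } x\in X\}$ even dispenses with the paper's separate treatment of the case $|X|=1$, since your contrapositive argument already produces two distinct points before Urysohn's lemma is invoked.
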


\begin{proof}
Note that if $|X|=1$, then both assertions are equivalent.
Let us therefore assume that $|X|>1$.
If $(X,G)$ is not faithful, then there is some $g\in G\setminus \{e\}$ such that $g.x=x$ for all $x\in X$.
It then follows by \eqref{SigmaInduced} that $\sigma_{g^{-1}} = \identity_{C(X)}$, thus $\sigma$ is not injective.
Conversely, let $(X,G)$ be faithful. Seeking a contradiction, suppose that $\sigma$ is not injective.
There is some $g\in G \setminus \{e\}$ such that $f(g^{-1}.x)=f(x)$ for all $f\in C(X)$ and $x\in X$.
Since $(X,G)$ is faithful, there is some $x\in X$ such that $g^{-1}.x\neq x$.
By Urysohn's lemma (and the fact that $|X|>1$) we conclude that there is some $f : X \to [0,1] \subseteq \C$ such that
$f(g^{-1}.x)\neq f(x)$. This is a contradiction.
\end{proof}

\begin{lemma}\label{MinimalEquivalences}
The following four assertions are equivalent:
\begin{enumerate}[{\rm (i)}]
	\item $(X,G)$ is minimal;
	\item There is no non-empty closed proper $G$-invariant subset of $X$;
	\item $C(X)$ is $G$-simple with respect to closed ideals;
	\item $C(X)$ is $G$-simple.
\end{enumerate}
\end{lemma}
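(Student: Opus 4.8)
The plan is to run through the four assertions by first disposing of the two soft equivalences and then concentrating on the one genuinely analytic point. The equivalence of (i) and (ii) needs no work at all: it is precisely the definition of minimality recorded above, together with the remark that $g.V\subseteq V$ for all $g\in G$ is the same as $g.V=V$ for all $g\in G$. So I would simply note this and move on.

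For (ii)$\Leftrightarrow$(iii) I would exploit the order-reversing bijection $\varphi,\psi$ between $\mathcal{P}_{\cl}(X)$ and $\Ideal_{\cl}(C(X))$ already set up above; the only thing left to verify is that it matches $G$-invariant sets with $G$-invariant ideals. Concretely, if $V$ is $G$-invariant then so is $\psi(V)$: for $f\in\psi(V)$ and $x\in V$ one has $\sigma_g(f)(x)=f(g^{-1}.x)=0$ since $g^{-1}.x\in V$; and symmetrically $\varphi(I)$ is $G$-invariant whenever $I$ is. As $\varphi$ and $\psi$ are mutually inverse, $G$-invariance is then preserved in both directions. Since $\psi(X)=\{0\}$ and $\psi(\emptyset)=C(X)$, a non-empty proper closed $G$-invariant subset corresponds exactly to a non-zero proper closed $G$-invariant ideal, so the absence of the former (assertion (ii)) is equivalent to the absence of the latter (assertion (iii)).

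The substance of the lemma is (iii)$\Leftrightarrow$(iv). The implication (iv)$\Rightarrow$(iii) is immediate, since a closed $G$-invariant ideal is in particular a $G$-invariant ideal. For the converse I would argue by contraposition: supposing (iv) fails, pick a non-zero $G$-invariant ideal $I\subsetneq C(X)$ and manufacture a non-empty proper closed $G$-invariant subset, contradicting (ii) (which is equivalent to (iii)). The candidate is the common zero set $Z(I)=\{x\in X\mid f(x)=0 \text{ for all } f\in I\}$, which is closed and, by the same pointwise computation as above, $G$-invariant. Because $I\neq\{0\}$, some $f\in I$ is non-zero somewhere, so $Z(I)\neq X$; hence it remains only to show $Z(I)\neq\emptyset$.

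This last point is the main obstacle, precisely because $I$ need not be closed while the correspondence $\varphi,\psi$ a priori controls only closed ideals: I must rule out a proper but dense $G$-invariant ideal. Here I would invoke the compactness of $X$ together with the $C^*$-structure of $C(X)$. If $Z(I)=\emptyset$, then for each $x\in X$ there is $f_x\in I$ with $f_x(x)\neq0$; since $I$ is an ideal, $|f_x|^2=\overline{f_x}f_x\in I$ as well, and it is strictly positive on an open neighbourhood of $x$. Passing to a finite subcover of $X$ and summing the corresponding functions produces an element of $I$ that is strictly positive on all of $X$, hence bounded away from zero and therefore invertible in $C(X)$; this forces $1\in I$ and $I=C(X)$, a contradiction. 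Thus $Z(I)$ is a non-empty proper closed $G$-invariant subset, completing the cycle of equivalences.
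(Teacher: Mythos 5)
Your proof is correct, and for the steps (i)$\Leftrightarrow$(ii)$\Leftrightarrow$(iii) and (iv)$\Rightarrow$(iii) it coincides with the paper's. The genuine difference is in the implication (iii)$\Rightarrow$(iv). The paper passes from a non-zero $G$-invariant ideal $I$ to its closure $\bar{I}$, notes that $\bar{I}$ is still $G$-invariant because each $\sigma_g$ is continuous, and then invokes the general Banach-algebra fact that the closure of a proper ideal in a unital Banach algebra is proper, forcing $\bar{I}=C(X)$ to imply $I=C(X)$. You instead work on the spectral side: you attach to $I$ its hull $Z(I)$, check it is closed, $G$-invariant and not all of $X$, and then rule out $Z(I)=\emptyset$ by the classical compactness argument (cover $X$ by sets where some $|f_x|^2\in I$ is strictly positive, sum over a finite subcover to get an invertible element of $I$). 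The two arguments rest on the same underlying fact -- a continuous function on a compact space that is bounded away from zero is invertible, equivalently the group of units of $C(X)$ is open -- but yours is self-contained for $C(X)$ and avoids quoting the general Banach-algebra lemma, at the cost of a slightly longer verification; the paper's is shorter once that lemma is granted and would generalize verbatim to any unital Banach algebra in place of $C(X)$. Your logical bookkeeping (deducing $\neg$(iv)$\Rightarrow\neg$(ii) and then using the already-established equivalence of (ii) and (iii)) is sound.
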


\begin{proof}
(i)$\Leftrightarrow$(ii):
This is indeed the definition.\\
(ii)$\Leftrightarrow$(iii):
Note that $\varphi$ and $\psi$ also give rise to a one-to-one correspondence between closed $G$-invariant subsets of $X$
and closed $G$-invariant (with respect to $\sigma$) ideals of $C(X)$.\\
(iii)$\Rightarrow$(iv):
Suppose that $C(X)$ is $G$-simple with respect to closed ideals. Let
$I$ be a non-zero $G$-invariant ideal of $C(X)$. We wish to show that $I=C(X)$.
Denote by $\bar{I}$ the closure of $I$, and note that this is also an ideal of $C(X)$.
The maps $\sigma_g : C(X) \to C(X)$, for $g\in G$, are continuous
and hence the $G$-invariance of $I$ implies $\sigma_g(\bar{I}) \subseteq \bar{I}$, for $g\in G$.
This shows that $\bar{I}$ is a $G$-invariant (and closed) ideal of $C(X)$. By the assumption we get $\bar{I}=C(X)$.
Since $C(X)$ is a unital $C^*$-algebra (and in particular a Banach algebra),
the closure of any proper ideal is still a proper ideal.
Therefore we conclude that $I=C(X)$.\\
(iv)$\Rightarrow$(iii):
This is trivial.
\end{proof}

\subsection{A faithful, minimal and non-free action of an ICC group}\label{Section:MotExempel}

Recall that a group $G$ is said to be an \emph{ICC group} if it has the \emph{infinite conjugacy class property},
i.e. for each $g\in G \setminus \{e\}$ the set $\{hgh^{-1} \mid h\in G\}$ is infinite.
Clearly, finite groups and abelian groups can not be ICC.

\begin{proposition}\label{CenterContainedInAForICC}
Let $\Skew$ be a skew group ring. If $G$ is an ICC group, then $Z(\Skew)\subseteq A$.
\end{proposition}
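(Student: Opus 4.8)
The plan is to take an arbitrary central element $z = \sum_{g\in G} a_g u_g \in Z(\Skew)$ and show that every $a_g$ with $g\neq e$ must vanish, forcing $z\in A$. The key is to exploit the ICC property together with the finiteness of the support. First I would use the fact that $z$ commutes with every basis element $u_h$. Writing out the commutation relation $u_h z u_h^{-1} = z$ and using the product rule $u_h a u_h^{-1} = \sigma_h(a)$, one computes
\begin{displaymath}
	u_h z u_h^{-1} = \sum_{g\in G} \sigma_h(a_g) u_{hgh^{-1}}.
\end{displaymath}
Matching coefficients against $z = \sum_{g\in G} a_g u_g$ then yields, for each $g\in G$, the relation $\sigma_h(a_g) = a_{h^{-1}gh}$ (after reindexing), valid for all $h\in G$.

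The crucial observation is what this says about the \emph{support} of $z$. Since $\sigma_h$ is an automorphism, $a_g \neq 0$ if and only if $a_{h^{-1}gh}\neq 0$; in other words, $\Supp(z)$ is closed under conjugation by every element of $G$, i.e. it is a union of conjugacy classes of $G$. Because $\Supp(z)$ is by definition a \emph{finite} set, it can only contain conjugacy classes that are themselves finite. Here is where the ICC hypothesis enters decisively: for every $g\in G\setminus\{e\}$ the conjugacy class $\{hgh^{-1}\mid h\in G\}$ is infinite, so no such $g$ can lie in the finite set $\Supp(z)$. The only conjugacy class that is finite is the singleton $\{e\}$ (whose elements are central, and $e$ is always central). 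Hence $\Supp(z)\subseteq \{e\}$, which is exactly the statement that $z = a_e u_e \in A$.

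Therefore $Z(\Skew)\subseteq A$, as claimed. I expect the main (though still routine) obstacle to be the careful bookkeeping in the coefficient-matching step: one must verify that the commutation of $z$ with all of $A$ and all of $\{u_h\}_{h\in G}$ really does reduce to the single family of relations $\sigma_h(a_g)=a_{h^{-1}gh}$, and in particular that the automorphism property of each $\sigma_h$ guarantees the equivalence $a_g\neq 0 \iff a_{h^{-1}gh}\neq 0$ that makes $\Supp(z)$ conjugation-invariant. Once that is in place, the passage from ``finite union of conjugacy classes'' to ``contained in $\{e\}$'' via the ICC property is immediate and requires no computation.
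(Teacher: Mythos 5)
Your proposal is correct and follows essentially the same route as the paper: conjugating $z$ by each $u_h$ yields $\sum_{s\in G}\sigma_h(a_{h^{-1}sh})u_s = \sum_{g\in G}a_g u_g$, so $\Supp(z)$ is a finite union of conjugacy classes, and the ICC property forces $\Supp(z)\subseteq\{e\}$. The only nit is the direction of your reindexing (the relation is $a_g=\sigma_h(a_{h^{-1}gh})$, i.e. $\sigma_h(a_g)=a_{hgh^{-1}}$), which does not affect the argument since $\sigma_h$ is an automorphism either way.
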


\begin{proof}
Let $r = \sum_{g\in G} a_g u_g$ be an element of $Z(\Skew)$.
For any $h\in G$ we have
\begin{displaymath}
\sum_{g\in G} a_g u_g = u_h \left( \sum_{g\in G} a_g u_g \right) u_h^{-1} = \sum_{g\in G} \sigma_h(a_g) u_{hgh^{-1}}
= \sum_{s\in G} \sigma_h(a_{h^{-1}sh}) u_s.	
\end{displaymath}
Take $g\in \Supp(r)$ and note that $a_g = \sigma_h(a_{h^{-1}gh})\neq 0$ for all $h\in G$.
Since $G$ is an ICC group and $\Supp(r)$ is finite we get $g=e$.
This shows that $Z(\Skew)\subseteq A$.
\end{proof}

Given a transformation group $(X,G)$ and $x\in X$
we let $\Stab_G(x):=\{g\in G \mid g.x=x\}$ denote the \emph{stabilizer subgroup of $x$} in $G$.

\begin{lemma}\label{ICClemma}
Let $G$ be a group which acts faithfully on a set $X$.
If the set $\Stab_G(x).y$ is infinite for any
two $x,y\in X$ such that $x\neq y$,
then $G$ is an ICC group.
\end{lemma}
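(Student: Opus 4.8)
The plan is to verify the defining property of an ICC group head-on: fix an arbitrary $g\in G\setminus\{e\}$ and produce infinitely many distinct conjugates $hgh^{-1}$. The natural first move is to feed faithfulness into the hypothesis. Since $g\neq e$ and $G$ acts faithfully on $X$, there is some $x\in X$ with $g.x\neq x$. I would then set $y:=g.x$, so that $x\neq y$ and we are precisely in the situation the hypothesis addresses, namely the infinitude of $\Stab_G(x).y$.

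The crux is to conjugate $g$ only by elements of $\Stab_G(x)$ and to record where these conjugates send the point $x$. For any $k\in\Stab_G(x)$ one computes
\[
(kgk^{-1}).x = k.\big(g.(k^{-1}.x)\big) = k.(g.x) = k.y,
\]
where the middle equality uses $k^{-1}.x=x$. Thus the conjugate $kgk^{-1}$ moves $x$ to the point $k.y$. Since the hypothesis guarantees that $\Stab_G(x).y=\{k.y\mid k\in\Stab_G(x)\}$ is infinite, the images $(kgk^{-1}).x$ assume infinitely many distinct values as $k$ ranges over $\Stab_G(x)$.

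The conclusion then follows contrapositively: if two conjugates $kgk^{-1}$ and $k'gk'^{-1}$ were equal as elements of $G$, they would act identically on $X$, forcing $k.y=(kgk^{-1}).x=(k'gk'^{-1}).x=k'.y$. Because the values $k.y$ are infinite in number, the conjugates $kgk^{-1}$ (and hence the full conjugacy class $\{hgh^{-1}\mid h\in G\}$) must likewise be infinite; as $g$ was arbitrary, $G$ is ICC. I do not anticipate a real obstacle here, since the argument is short once the right substitution is spotted. The only step meriting care is the identity $(kgk^{-1}).x=k.y$, which hinges on the choice $y=g.x$ together with $k\in\Stab_G(x)$; faithfulness itself enters solely to supply a point $x$ that $g$ moves.
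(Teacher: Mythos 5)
Your proof is correct and follows essentially the same route as the paper: use faithfulness to find $x$ with $y:=g.x\neq x$, compute $(kgk^{-1}).x=k.y$ for $k\in\Stab_G(x)$, and conclude from the infinitude of $\Stab_G(x).y$ that the conjugacy class of $g$ is infinite. You merely spell out the final counting step a bit more explicitly than the paper does.
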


\begin{proof}
Let $g\in G \setminus \{e\}$.
Then there is some $x\in X$ such that $y:= g.x \neq x$.
For any $h\in \Stab_G(x)$ we have
$hgh^{-1}.x=h.(g.x)=h.y$.
By the assumption $\{hgh^{-1}.x \mid h\in \Stab_G(x)\}$
is infinite, so in particular $G$ is an ICC group.
\end{proof}

\begin{proposition}\label{HomeoIsICC}
$\Homeo(S^1)$, the group of all homeomorphisms of the circle $S^1$, is an ICC group.
\end{proposition}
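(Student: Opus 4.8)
The plan is to apply Lemma \ref{ICClemma} to the group $G = \Homeo(S^1)$ acting on the set $X = S^1$ in the natural way, where each homeomorphism acts as a bijection of $S^1$. The action is faithful essentially by definition: a homeomorphism fixing every point of $S^1$ is the identity. It therefore remains to verify the hypothesis of the lemma, namely that for any two distinct points $x,y\in S^1$ the orbit $\Stab_G(x).y$ is infinite.

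To this end I would identify $S^1\setminus\{x\}$ with an open arc via a parametrization $S^1\cong \R/\Z$ with $x=0$, so that $S^1\setminus\{x\}$ corresponds to the open interval $(0,1)$. The key observation is that every orientation-preserving homeomorphism of the closed interval $[0,1]$ fixing both endpoints descends, through the identification $0\sim 1$, to a homeomorphism of $S^1$ fixing $x$. Continuity across the glued point holds precisely because both endpoints are fixed, and the descended continuous bijection is automatically a homeomorphism since $S^1$ is compact and Hausdorff. Thus each such interval homeomorphism yields a genuine element of $\Stab_G(x)$.

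Next I would exhibit enough such elements to make the orbit of $y$ infinite. In fact the orientation-preserving homeomorphisms of $[0,1]$ fixing the endpoints act transitively on the interior $(0,1)$: given $y,z\in(0,1)$ one can take, for instance, the piecewise-linear increasing homeomorphism of $[0,1]$ that fixes $0$ and $1$ and sends $y$ to $z$. Pushing this through the identification produces an element of $\Stab_G(x)$ carrying the point corresponding to $y$ to the point corresponding to $z$. Consequently $\Stab_G(x).y$ equals all of $S^1\setminus\{x\}$, which is infinite.

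Having established that $\Stab_G(x).y$ is infinite for every pair of distinct points $x,y$, Lemma \ref{ICClemma} applies directly and yields that $\Homeo(S^1)$ is an ICC group. The only genuine (and mild) obstacle is the verification that the constructed interval homeomorphisms really do extend to homeomorphisms of $S^1$ fixing $x$ — in particular continuity at the glued point $x$ — rather than the transitivity statement, which is elementary.
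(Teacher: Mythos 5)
Your proof is correct and takes essentially the same approach as the paper: both verify the hypothesis of Lemma \ref{ICClemma} by producing, for each $z\neq x$, a piecewise-linear homeomorphism of $S^1$ fixing $x$ and sending $y$ to $z$, so that $\Stab_G(x).y$ is infinite. Your version merely makes explicit the cut-at-$x$-and-reglue construction that the paper leaves implicit.
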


\begin{proof}
The group $G=\Homeo(S^1)$ acts on $X=S^1$ in an obvious way and this action is clearly faithful.
Let $x,y\in S^1$ such that $x\neq y$. Take any $z\in S^1$ such that $z\neq x$.
We now define an invertible piecewise linear map $f_z : S^1 \to S^1$
satisfying $f_z(y)=z$ and $f_z \in \Stab_G(x)$. This is always possible since $z\neq x$.
We can choose $z$ in infinitely many ways and hence $\Stab_G(x).y$ is infinite.
By Lemma \ref{ICClemma}, the desired conclusion follows.
\end{proof}

\begin{example}\label{MotExempel}
Let $X=S^1$ be the circle, $G=\Homeo(S^1)$ the group of all homeomorphisms of $S^1$
and consider the skew group algebra $C(X)\rtimes_{\sigma} G$ where $\sigma$ is defined by \eqref{SigmaInduced}.
It is easy to see that the action $G\curvearrowright X$ is faithful and minimal.
Hence, by Lemma \ref{FaithfulnessInjectivity} $\sigma$ is injective
and by Lemma \ref{MinimalEquivalences} $C(X)$ is $G$-simple.

By Proposition \ref{HomeoIsICC}, $G$ is an ICC group and
by combining Proposition \ref{CenterContainedInAForICC} and Lemma \ref{CentrumEkvivalenser}\eqref{CentrumEkvivalenser:B},
we conclude that $Z(C(X)\rtimes_{\sigma} G)$ is a field.

We claim that $C(X)\rtimes_{\sigma} G$ is not simple. By Theorem \ref{SimplicityMaxComm}
we need to show that $C(X)$ is not a maximal commutative subalgebra of $C(X)\rtimes_{\sigma} G$.
To see this, take $g\in G\setminus \{e\}$ such that $g^{-1}(x)=x$ for all $x\in [0,\frac{1}{2}]$. Choose a non-zero $f_g\in C(X)$
such that $f_g(x)=0$ for all $x\in [\frac{1}{2},1]$. Then it follows that $(\sigma_g(b)-b)f_g=0$
for any $b\in C(X)$. Hence $f_g u_g$ commutes with each element of $C(X)$.
This shows that $C(X)$ is not a maximal commutative subalgebra.
\end{example}

\begin{remark}
A minimal and faithful action of an abelian group on a compact Hausdorff space
is necessarily \emph{free}, in the sense that if $g\in G \setminus \{e\}$ then for any $x\in X$ we have $g.x\neq x$.
The action in Example \ref{MotExempel} is clearly non-free.
\end{remark}

\section{Proof of the main results}

We are now fully prepared to 
prove the main results of this article.

\begin{proof}[Proof of Theorem \ref{MainTheorem}]
\eqref{MainTheorem:A} This follows immediately from Proposition \ref{NecessaryConditions}. \\
\eqref{MainTheorem:B} Consider Example \ref{MotExempel}.
In the example it is explained that \eqref{MainTheorem:FieldGSimple} and \eqref{MainTheorem:InjectiveGSimple} hold,
but that \eqref{MainTheorem:Simple} fails to hold. \\
\eqref{MainTheorem:C} We need to show that \eqref{MainTheorem:FieldGSimple} implies \eqref{MainTheorem:Simple}.
The rest follows from \eqref{MainTheorem:A}.
Suppose that $A$ is $G$-simple and that $Z(\Skew)$ is a field.
Let $I$ be a non-zero ideal of $\Skew$. By Proposition \ref{CenterIntersection} we conclude that
$I\cap Z(\Skew) \neq \{0\}$. Hence $1\in I$ and therefore $I=\Skew$. This shows that $\Skew$ is simple.\\
\eqref{MainTheorem:D} We need to show that \eqref{MainTheorem:InjectiveGSimple} implies \eqref{MainTheorem:Simple}.
The rest follows from \eqref{MainTheorem:A} and \eqref{MainTheorem:C}. \\
Suppose that $A$ is $G$-simple and that $\sigma$ is injective.
By Proposition \ref{MaxCommInjectivity} $A$ is a maximal commutative subring of $\Skew$
and hence, by Theorem \ref{SimplicityMaxComm}, $\Skew$ is simple.
\end{proof}

\begin{proof}[Proof of Theorem \ref{TransGroupTheorem}]
\eqref{TransGroupTheorem:A} It follows from Theorem \ref{SimplicityMaxComm}
that \eqref{TransGroupTheorem:Simple} and \eqref{TransGroupTheorem:GSimpleMaxComm} are equivalent.
The other claim follows from Theorem \ref{MainTheorem}\eqref{MainTheorem:A}, Lemma \ref{FaithfulnessInjectivity} and Lemma \ref{MinimalEquivalences}.\\
\eqref{TransGroupTheorem:B} This follows immediately from Lemma \ref{FaithfulnessInjectivity} and Lemma \ref{MinimalEquivalences}.\\
\eqref{TransGroupTheorem:C} Consider Example \ref{MotExempel}.
In the example it is explained that
\eqref{TransGroupTheorem:GSimpleField}, \eqref{TransGroupTheorem:GSimpleInjective} and
\eqref{TransGroupTheorem:MinimalFaithful} hold,
but that \eqref{TransGroupTheorem:Simple} and \eqref{TransGroupTheorem:GSimpleMaxComm} fail to hold.\\
\eqref{TransGroupTheorem:D} This follows from \eqref{TransGroupTheorem:A}, \eqref{TransGroupTheorem:B} and Theorem \ref{MainTheorem}\eqref{MainTheorem:D}.
\end{proof}

\begin{remark}
If $A$ is commutative, then injectivity of $\sigma$ clearly implies outerness of $G$.
Hence, an alternative way of proving that \eqref{MainTheorem:InjectiveGSimple} implies \eqref{MainTheorem:Simple} in Theorem \ref{MainTheorem} (under the assumption that $A$ is commutative and $G$ is abelian),
is by applying Corollary \ref{CrowCorollary}.
\end{remark}

\section*{Acknowledgements}
The author is immensely grateful to Steven Deprez for stimulating discussions
and in particular for sharing his knowledge on ICC groups,
which gave rise to Section \ref{Section:MotExempel}.
The author is also grateful to Patrik Lundstr\"{o}m for stimulating discussions on the topic of this article.
This research was supported by The Swedish Research Council (postdoctoral fellowship no. 2010-918)
and
The Danish National Research Foundation (DNRF) through the Centre for Symmetry and Deformation.

\bibliographystyle{amsalpha}

\end{document}